\newtheorem{theorem}{Theorem}[section]
\newtheorem{lemma}[theorem]{Lemma}
\newtheorem{proposition}[theorem]{Proposition}
\newtheorem{corollary}[theorem]{Corollary}
\newcommand{\para}{\mathbin{\!/\mkern-5mu/\!}}
\title{Yet another heat semigroup characterization of BV functions on Riemannian manifolds}
\author{Patricia Alonso Ruiz\footnote{Partly supported by the NSF grant DMS-1951577}\,{ \& }Fabrice Baudoin \footnote{Partially funded by NSF DMS-1901315}}
\begin{document}

\maketitle

\vspace*{-1em}

\abstract{This paper provides a characterization of functions of bounded variation (BV) in a compact Riemannian manifold in terms of the short time behavior of the heat semigroup. In particular, the main result proves that the total variation of a function equals the limit characterizing the space BV. The proof is carried out following two fully independent approaches, a probabilistic and an analytic one. Each method presents different advantages. 

\tableofcontents

\section{Introduction}
Let $\mathbb M$ be a compact, connected, smooth Riemannian manifold of dimension $n$ with Riemannian volume measure $\mu$. Denoting by $\Delta$ the Laplace-Beltrami operator and by $P_t=e^{t \Delta}$ the heat semigroup on $\mathbb M$, the goal of this paper is to prove the following heat semigroup characterization of the class of $BV$ functions.

\begin{theorem}\label{main}
Let $f\in L^1(\mathbb{M},\mu)$. Then, $f\in BV( {\mathbb M})$ if and only if
\begin{equation}\label{main:limsup}
\limsup_{t \to 0^+} \frac{1}{\sqrt{t}}  \int_{\mathbb M} P_t (|f-f(x)|)(x) d\mu(x)<\infty.
\end{equation}
Moreover, when the latter limsup is finite, the limit exists and satisfies
\begin{equation}\label{main:lim}
\lim_{t \to 0^+} \frac{1}{\sqrt{t}}  \int_{\mathbb M} P_t (|f-f(x)|)(x) d\mu(x) =  \frac{ 2 }{\sqrt{\pi}} \| Df \| (\mathbb M),
\end{equation}
where $\| Df \| (\mathbb M)$ denotes the total variation of $f$.
\end{theorem}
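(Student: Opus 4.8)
The plan is to prove the two directions separately and to obtain the sharp constant through a reduction to indicator functions. Write
\[
\Phi(t) := \frac{1}{\sqrt t}\int_{\mathbb M} P_t(|f-f(x)|)(x)\,d\mu(x) = \frac{1}{\sqrt t}\int_{\mathbb M}\int_{\mathbb M} p_t(x,y)\,|f(y)-f(x)|\,d\mu(y)\,d\mu(x),
\]
where $p_t$ is the heat kernel. For the easy (sufficiency) direction, Jensen's inequality applied to the probability measure $p_t(x,\cdot)\,d\mu$ gives $P_t(|f-f(x)|)(x)\ge |P_tf(x)-f(x)|$, hence $\Phi(t)\ge \frac1{\sqrt t}\|P_tf-f\|_{L^1}$; finiteness of the $\limsup$ in \eqref{main:limsup} therefore forces $\liminf_{t\to0}\frac1{\sqrt t}\|P_tf-f\|_{L^1}<\infty$, which is a known criterion guaranteeing $f\in BV(\mathbb M)$. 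The bulk of the work is then to show that for $f\in BV(\mathbb M)$ the limit exists and equals $\frac2{\sqrt\pi}\|Df\|(\mathbb M)$.

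I would first settle the smooth case, where the constant is transparent and which serves as the local model. For $f\in C^\infty(\mathbb M)$ the short-time Gaussian asymptotics of the kernel identify the displacement $\exp_x^{-1}(B_t)\in T_x\mathbb M$ with $\sqrt{2t}\,Z$ to leading order, $Z\sim\mathcal N(0,\mathrm{Id})$. Since $f(y)-f(x)=\langle\nabla f(x),\exp_x^{-1}(y)\rangle+O(d(x,y)^2)$, this yields the pointwise limit
\[
\frac{1}{\sqrt t}\,P_t(|f-f(x)|)(x)\;\longrightarrow\;\sqrt{2}\,\mathbb E|\langle\nabla f(x),Z\rangle| = \sqrt{2}\,|\nabla f(x)|\,\mathbb E|Z_1| = \tfrac{2}{\sqrt\pi}\,|\nabla f(x)|,
\]
using $\mathbb E|Z_1|=\sqrt{2/\pi}$. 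A uniform Gaussian upper bound on the kernel justifies dominated convergence, so $\Phi(t)\to\frac2{\sqrt\pi}\int_{\mathbb M}|\nabla f|\,d\mu=\frac2{\sqrt\pi}\|Df\|(\mathbb M)$.

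To reach a general $f\in BV$ I would pass through indicator functions via the layer-cake identity $|f(y)-f(x)|=\int_{\mathbb R}|\mathbf 1_{E_s}(y)-\mathbf 1_{E_s}(x)|\,ds$, $E_s=\{f>s\}$, together with Fubini to obtain
\[
\int_{\mathbb M}P_t(|f-f(x)|)(x)\,d\mu(x) = 2\int_{\mathbb R}\Big(\int_{E_s}P_t\mathbf 1_{E_s^c}\,d\mu\Big)\,ds.
\]
The inner quantity is the heat content $\int_E P_t\mathbf 1_{E^c}\,d\mu=\mathbb P_\mu(B_0\in E,\,B_t\in E^c)$, the probability that stationary Brownian motion crosses $\partial E$ in time $t$; for a set of finite perimeter it satisfies $\frac1{\sqrt t}\int_E P_t\mathbf 1_{E^c}\,d\mu\to\frac1{\sqrt\pi}\,\mathrm{Per}(E)$, and the $BV$ coarea formula $\|Df\|(\mathbb M)=\int_{\mathbb R}\mathrm{Per}(E_s)\,ds$ then delivers the claimed constant \emph{provided} the limit can be moved inside the $ds$-integral.

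That interchange is the crux. The lower bound at the level of $f$ follows from the set-case limit by ordinary Fatou applied to the $ds$-integral, requiring no domination. For the upper bound, however, I expect to need a uniform-in-$t$ isoperimetric (Maz'ya-type) domination $\frac1{\sqrt t}\int_E P_t\mathbf 1_{E^c}\,d\mu\le C\,\mathrm{Per}(E)$ valid for all small $t$, supplying the integrable majorant $C\,\mathrm{Per}(E_s)$ needed for a reverse-Fatou argument. The set-case limit itself I would establish by localizing at the reduced boundary $\partial^*E$, where the blow-up makes the boundary asymptotically flat and the half-space model produces exactly the constant $\frac1{\sqrt\pi}$, lower semicontinuity of the total variation closing the remaining estimate. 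Matching the resulting $\liminf$ and $\limsup$ shows the limit exists and equals $\frac2{\sqrt\pi}\|Df\|(\mathbb M)$. The probabilistic route replaces these asymptotics and the crossing estimate by a direct local-time/occupation-time analysis of Brownian motion near $\partial^*E$, but the same domination in the $s$-integral remains the principal difficulty.
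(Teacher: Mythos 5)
Your outline follows a genuinely different route from the paper: you reduce to indicator functions via the layer-cake/coarea decomposition and try to import Ledoux-type heat-content asymptotics for sets of finite perimeter, whereas the paper never touches indicator functions. Instead it replaces $p_t(x,y)$ by the model Gaussian kernel $(4\pi t)^{-n/2}e^{-d(x,y)^2/4t}$ on small balls (using the Minakshisundaram--Pleijel expansion and the finiteness of the limsup for the Gaussian kernel to control the error), proves the upper bound $\le(1+\delta)\frac{2}{\sqrt\pi}\|Df\|$ by writing $|f(\exp_x(ru))-f(x)|$ as an integral of $\nabla f$ along the geodesic for $C^1$ functions, changing variables, and passing to $BV$ by smooth approximation, and proves the matching lower bound by mollifying $f$ in normal coordinates and invoking lower semicontinuity of the total variation. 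Your sufficiency direction (Jensen plus a known criterion) and your smooth-case computation are fine and essentially parallel to the paper's Proposition 5.5; the identity $\int_{\mathbb M}P_t(|f-f(x)|)\,d\mu=2\int_{\mathbb R}\int_{E_s}P_t\mathbf 1_{E_s^c}\,d\mu\,ds$ is correct by symmetry of $p_t$, Fatou gives the lower bound, and the uniform domination $\frac{1}{\sqrt t}\int_E P_t\mathbf 1_{E^c}\,d\mu\le C\,\mathrm{Per}(E)$ needed for reverse Fatou is indeed available on a compact manifold (it is the quantitative form of the first inequality in the paper's display \eqref{E:BV2_limsup}).

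The genuine gap is the set-case limit itself: you assert that $\frac{1}{\sqrt t}\int_E P_t\mathbf 1_{E^c}\,d\mu\to\frac{1}{\sqrt\pi}\mathrm{Per}(E)$ for every set of finite perimeter in a compact Riemannian manifold, and everything else in your argument is bookkeeping around this claim. But this statement is not a citable black box in the manifold setting --- it is precisely the special case $f=\mathbf 1_E$ of the theorem you are proving, and it is where all the difficulty lives. Your one-sentence sketch (blow up at the reduced boundary, compare with the half-space model) hides three substantial steps: De Giorgi's structure theorem for $\partial^*E$ transplanted into normal coordinates; a quantitative comparison, uniform over the rectifiable boundary, between the manifold heat kernel and the Euclidean half-space heat content (the curvature corrections enter at order $t$ relative to the leading $\sqrt t$ term and must be shown not to accumulate over $\partial^*E$); and an argument that the non-rectifiable part of $\partial E$ and the points of density $0$ or $1$ contribute nothing at scale $\sqrt t$. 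In $\mathbb R^n$ this is the content of Ledoux~\cite{Led94} and Miranda--Pallara--Paronetto--Preunkert~\cite{MPPP07a}, which the paper explicitly cites as the Euclidean antecedent it is generalizing; on a manifold it would have to be proved from scratch, at which point you have essentially reproved the main theorem for indicators. So the proposal is a viable program but, as written, it relocates rather than resolves the central difficulty; you would either need to carry out the blow-up analysis in full, or switch to an argument (like the paper's mollification lower bound plus geodesic-integral upper bound) that avoids the reduced-boundary machinery altogether.
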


The analogue of Theorem~\ref{main} in the Euclidean case $\mathbb{M}=\mathbb{R}^n$ was first proved by M.~Miranda, Jr., D.~Pallara, F.~Paronetto, and M.~Preunkert in~\cite{MPPP07b}.
The characterization~\eqref{main:limsup} of the class of BV functions can be deduced from known results that hold in more general settings in the context of metric measure spaces, see the discussion in Section~\ref{S:BV_intro}. The main contribution of the present paper is thus to prove in the manifold setting that if the $\limsup$ in~\eqref{main:limsup} is finite, then the limit actually exists and is given by \eqref{main:lim}.  

\medskip

Our motivation to study the existence of the limit~\eqref{main:lim} comes from a larger perspective following the recent works~\cite{ARB20, ABCRST1, ABCRST2, ABCRST3}. In those works a basic idea is to take~\eqref{main:limsup} as a definition of a BV function and deduce from that definition the usual embedddings and functional inequalities satisfied by BV functions. One of the most appealing features of a characterization like~\eqref{main:limsup} only requires a heat semigroup, not a a gradient or even a heat kernel, and it therefore makes sense in the general context of Dirichlet spaces, including many fractals and infinite-dimensional spaces.

\medskip

We propose two fully independent approaches to prove Theorem~\ref{main}: The first one is probabilistic and employs tools from stochastic differential geometry and martingale theory, see Section~\ref{proba approach}. The advantage of this approach is that 
the dimension of the underlying space $\mathbb{M}$ plays no crucial role. As a consequence, this method might be applicable to characterize BV functions in the Wiener space, see~\cite{FM01}. However, it has the drawback that so far, using that method, we could only prove the existence of the limit~\eqref{main:lim} when the function $f$ is in the Sobolev space $W^{1,2}(\mathbb{M})$.

\medskip

On the contrary, the second approach to the proof of Theorem~\ref{main} presented in Section~\ref{S:Analytic_approach} is analytic and yields the result for any $f\in BV( {\mathbb M})$. The method has its roots in the paper~\cite{BBM01} by J.~Bourgain, H.~Brezis, and P.~Mironescu, and the main idea consists in thinking of the heat kernel on the manifold as a family of \textit{mollifiers} satisfying good properties; see also the recent works by G. Leoni and D. Spector~\cite{LS11} and by A.~Kreuml and O.~Mordhorst~\cite{KM19} where similar ideas are developed. The drawback in this analytic approach is that the arguments are very specific to Riemannian manifolds and the finite-dimensionality of $\mathbb{M}$ is used in a critical way through heat kernel estimates. 

\medskip

Finally, let us point out that Theorem \ref{main} may be true under weaker assumptions than the compactness of $\mathbb M$. 
There are indeed several parts of the proof that do not strictly require $\mathbb{M}$ to be compact.
For instance, the probabilistic approach would also work as it is when the manifold is complete and has finite volume and bounded Ricci curvature. The analytic approach certainly requires that $\mathbb{M}$ has a bounded geometry (in a sense to be made precise) and a positive injectivity radius and the study of minimal assumptions that ensure the validity of Theorem~\ref{main} might be the subject of future investigations.

\medskip

The paper is organized as follows: Section~\ref{S:Prelims} provides a short summary of the main concepts from Riemannian geometry and heat kernels that will be used in the subsequent sections. To put the main result Theorem~\ref{main} into context, Section~\ref{S:BV_intro} reviews the different characterizations of BV that are available in the manifold setting. Section~\ref{proba approach} develops the probabilistic proof of Theorem~\ref{main}, while Section~\ref{S:Analytic_approach} does the analytic counterpart. Both sections are fully independent and may be read separately. Finally, Section~\ref{S:main_Sobolev} briefly discusses the extension of Theorem~\ref{main} to Sobolev spaces $W^{1,p}(\mathbb{M})$ for $p>1$.

\section{Notations and preliminaries}\label{S:Prelims}

For the reader's convenience and to fix notation, this section records some classical tools from Riemannian geometry that will be used throughout the paper.
\subsection{Calculus on Riemannian manifolds}

In the sequel we will consider $(\mathbb{M},g)$ to  be an $n$-dimensional compact, connected and smooth Riemannian manifold with Riemannian metric $g$.
For each $p\in \mathbb{M}$, we denote by $T_p\mathbb{M}$ the corresponding tangent space and by $\Gamma(T\mathbb{M})$ the space of smooth vector fields. 
For the ease of notation, we write $\langle u,v \rangle=g_p (u,v)$ for any $u,v \in T_p \mathbb{M}$ and $p \in \mathbb M$. Further details concerning the following definitions and statements can be found for instance in~\cite{GHL90}.

\begin{itemize}[leftmargin=1em]
\item \textbf{Volume measure:} The volume element is the unique density $\nu$ (i.e. locally the absolute value of an $n$-form) such that  for any orthonormal basis $u_1, \cdots, u_n$ of $T_p \mathbb M$, $\nu (u_1, \cdots, u_n)=1$. In a system of local coordinates $(x^i)$ one has
\[
d\nu= \sqrt{ \det (g_{ij})} \, | d x^1 \wedge \cdots \wedge dx^n |,
\]
where $g_{ij}= \left\langle \frac{\partial}{\partial x^i}, \frac{\partial}{\partial x^j} \right \rangle$. The volume element induces on $\mathbb{M}$ a Borel measure $\mu$. Since $\mathbb{M}$ is compact, this Borel measure is finite and for normalization reasons, we shall assume throughout the paper and without loss of generality that $\mu (\mathbb{M})=1$.

\item \textbf{Gradient operator:} For $f \in C^1(\mathbb M)$, the gradient of $f$ is the unique vector field $\nabla f \in \Gamma( T\mathbb{M})$ such that for every$X \in \Gamma( T\mathbb{M})$,
\[
df (X)=\langle \nabla f, X \rangle,
\]
where $df$ denotes the differential of $f$.

\item \textbf{Divergence operator:} For every $C^1$ vector field $X \in \Gamma( T\mathbb{M})$, the divergence of $X$ is the unique function ${\rm div} (X)$ such that for every $f \in C^1(\mathbb M)$,
\[
\int_\mathbb{M} f  {\rm div} (X) d\mu =- \int_\mathbb{M} \langle \nabla f  , X \rangle d\mu.
\]

\item \textbf{Laplace-Beltrami operator:} We define the Laplace-Beltrami operator $\Delta $ by $\Delta ={\rm div} \circ \nabla$, i.e. for $f \in C^2 (\mathbb{M})$ 
\[
\Delta f= {\rm div} ( \nabla f).
\]

\item \textbf{Riemannian distance:} For $p,q \in \mathbb{M}$, the distance between $p$ and $q$ is defined as
\[
d(p,q) =\inf_{\gamma} \int_0^1 \| \gamma' (t) \| \, dt,
\]
where the infimum is taken over the set of piecewise $C^1$ functions $\gamma :[0,1] \to \mathbb{M}$ such that $\gamma(0)=p$, $\gamma(1)=q$.

\item \textbf{Geodesics:} A geodesic $\gamma: I \subset \mathbb{R}\to \mathbb M$ is a $C^2$ function such that $\nabla_{\gamma'} \gamma'=0$, where $\nabla$ denotes the Levi-Civita connection of $\mathbb M$. Locally, geodesics are always distance minimizing.

\item \textbf{Exponential map:} Let $p \in \mathbb{M}$. For $u \in T_p \mathbb M$ there exists a unique  geodesic $\gamma: [0,1] \to \mathbb{M}$ satisfying $\gamma(0)=p$, $\gamma'(0)=u$. In that case, we denote $\exp_p (u)=\gamma(1)$. The map $\exp_p: T_p \mathbb{M} \to \mathbb{M}$ is called the exponential map.

\item \textbf{Derivative of the exponential map:} Let $p \in \mathbb{M}$. The derivative of $\exp_p$ at $0 \in T_p \mathbb{M}$ is the identity map. Therefore, $\exp_p$ induces a local diffeomorphism $\exp_p: U_p \subset T_p M \to \exp_p (U_p) \subset \mathbb{M}$, where $U_p$ is an open neighborhood of $p$.

\item \textbf{Injectivity radius:} Since $\mathbb{M}$ is compact, there exists $\varepsilon >0$, such that for every $p \in \mathbb{M}$, $B_p (0 ,\varepsilon) \subset U_p $ where $B_p (0 ,\varepsilon)$ denotes the ball with radius $\varepsilon$ in $T_p \mathbb{M}$ for the norm $g_p$. The supremum of such $\varepsilon$'s is  called the injectivity radius of the manifold $\mathbb M$.

\item \textbf{Exponential coordinates:} The coordinates induced by $\exp_p: U_p \subset T_p \mathbb{M} \to \exp_p (U_p) \subset \mathbb{M}$ are called the exponential (or normal) coordinates.

\item \textbf{Integration in exponential polar coordinates:} Let $p \in \mathbb{M}$ and $\varepsilon >0$ be a constant smaller than the injectivity radius of $\mathbb{M}$. Then, if $f$ is bounded Borel function on metric ball in $\mathbb M$ with center $p$ and radius $\varepsilon >0$, $B(p,\varepsilon)$, then
\[
\int_{B(p,\varepsilon)} f \, d\mu=\int_0^\varepsilon \int_{S^{n-1}} f( \exp_p r u ) \theta_p (r,u) \, dr du,
\]
where $S^{n-1}$ is the unit sphere in $T_p \mathbb{M}$ equipped with its surface area measure $du$. The function $\theta_p$ is the Jacobian of the exponential map $\exp_p$ in exponential polar coordinates and satisfies
\begin{equation}\label{E:local_Jacobian}
\frac{\theta_p (r,u)}{r^{n-1}} \xrightarrow{r\to 0} 1,
\end{equation}
c.f.~\cite[p.166]{GHL90}. Moreover, since $\mathbb{M}$ is compact by assumption, it admits a global lower bound on its Ricci curvature tensor. In particular, one can deduce that there exist constants $C_1,C_2 >0$ such that 
\begin{equation}\label{E:Jacobian_exp_bound}
\theta_p (r,u) \le C_1 e^{C_2 r}
\end{equation}
for every $p, r, u$, see e.g.~\cite[p.171]{GHL90}. 

\item \textbf{Parallel transport:} If $p,q \in \mathbb{M}$ are sufficiently close, there exists a unique length parametrized geodesic $\gamma$ such that $\gamma (0)=p$ and $\gamma( d(p,q)) =q$. The parallel transport $\para_{p,q} : T_p \mathbb M \to T_q \mathbb M$ is then defined by $\para_{p,q} \xi = X(q)$ for any $\xi \in T_p \mathbb M$, where $X$ is the unique vector field along $\gamma$ such that $X(p)= \xi$ and $\nabla_{\gamma'} X =0$.
\end{itemize}

\subsection{Heat kernel on Riemannian manifolds}

For a general presentation of the heat semigroup theory and heat kernels on Riemannian manifolds, we refer for instance to \cite{baudoin2018geometric} or \cite{Gri09}.

\begin{itemize}[leftmargin=1em]
\item \textbf{Energy form:} The quadratic form given for any $f \in C^1(\mathbb{M})$ by
\[
\mathcal{E}(f,f) =\int_{\mathbb M} \| \nabla f \|^2 d\mu
\]
is closable in $L^2(\mathbb{M},\mu)$. The domain of the closed extension of $\mathcal{E}$ is the Sobolev space $W^{1,2} (\mathbb{M})$.

\item \textbf{$L^2$ Laplacian:} The Laplace-Beltrami operator $\Delta$ in $L^2(\mathbb{M},\mu)$ is essentially self-adjoint on the space $C^\infty (\mathbb{M})$ of smooth functions. It therefore admits a unique self-adjoint extension that will still denote by $\Delta$. The domain of this self-adjoint extension is the Sobolev space $W^{2,2} (\mathbb{M})$.

\item \textbf{Heat semigroup:} The operator $\Delta$ is the generator of a strongly continuous Markov contraction semigroup in $L^2(\mathbb{M},\mu)$ that we denote $(P_t)_{t\ge 0}$ and call the heat semigroup on $\mathbb{M}$.

\item \textbf{Heat kernel:} The heat semigroup $(P_t)_{t\ge 0}$ admits a smooth kernel with respect to the volume measure $\mu$, i.e. there exists a smooth function $p_t(x,y)$, $t >0$, $x,y \in \mathbb{M}$, called the heat kernel such that for every $t >0$ and $f \in L^\infty(\mathbb{M},\mu)$,
\[
P_t f (x) = \int_{\mathbb{M}} p_t (x,y) f(y) d\mu(y).
\]
\item \textbf{Heat kernel asymptotics:} (see~\cite[p.154]{Cha84},\cite{Lud19}). There exists $\kappa >0$ such that 
\begin{equation}\label{E:HK_expansion1}
p_t(x,y)= \frac{1}{(4\pi t)^{n/2}} e^{-\frac{d(x,y)^2}{4t}}\big(u_0(x,y) +t R(t,x,y) \big)
\end{equation}
for any $t\in (0,1)$ and $(x,y)\in\{ (x,y) \in \mathbb{M}\times \mathbb{M}, d(x,y) \le \kappa\}$, where $u_0$ is a continuous function with $u_0(x,x)=1$ and $R(t,x,y)$ is uniformly bounded on $(0,1) \times \mathbb{M}\times \mathbb{M}$.

\item \textbf{Heat kernel estimates (1):} (see \cite[Theorem 5.3.4]{Hsu02}). On a compact Riemannian manifold there exist $C_1,C_2>0$ so that
\begin{equation}\label{E:Hsu_HK_estimate1}
\frac{C_1}{t^{n/2}}e^{-\frac{d(x,y)^2}{4t}}\leq p_t(x,y)\leq \frac{C_2}{t^{(2n-1)/2}}e^{-\frac{d(x,y)^2}{4t}}
\end{equation}
for all $t\in (0,1)$ and $x,y\in\mathbb{M}$. Note that this estimate is also valid on the cut-locus.

\item \textbf{Heat kernel estimates (2):} (Li-Yau estimates, see \cite[Theorem 2.3.5]{baudoin2018geometric}). Since the Ricci curvature of compact manifolds is bounded from below, the heat kernel satisfies the Li-Yau type estimates 
\begin{equation}\label{E:Li_Yau_estimate1}
\frac{C_1}{\mu(B(x,\sqrt{t}))}e^{-\frac{d(x,y)^2}{4(1-\varepsilon)t}}\leq p_t(x,y)\leq \frac{C_2}{\mu(B(x,\sqrt{t}))}e^{-\frac{d(x,y)^2}{4(1+\varepsilon)t}}
\end{equation}
for some $C_1,C_2, \varepsilon>0$ and all $t\in (0,1)$, $x,y\in\mathbb{M}$. 
\end{itemize}

\section{Bounded variation functions on Riemannian manifolds}\label{S:BV_intro}
To motivate and situate the main result Theorem~\ref{main} in the context of existent work concerning functions of bounded variation on Riemannian manifolds, this section provides an overview of the most prominent already known characterizations of the space $BV(\mathbb{M})$. For more detailed expositions we refer to~\cite{MPPP07b,MMS16}. 
Analogous results are available for Sobolev spaces $W^{1,p}(\mathbb{M})$, $p>1$, which for the sake of brevity are not discussed here.

\subsection{Integration by parts}
The definition of BV functions on a Riemannian manifold that builds the parallel to the classical Euclidean case via integration by parts was introduced in~\cite{Mir03} as
\begin{equation}\label{E:BV_IbP_char}
BV(\mathbb{M}):=\{f\in L^1(\mathbb{M},\mu)\colon \|Df\|(\mathbb{M})<\infty\},
\end{equation}
where
\begin{equation}\label{E:def_TVf_mfd}
\|Df\|(\mathbb{M}):=\sup\Big\{\int_{\mathbb{M}}f\,{\rm div}\varphi\,d\mu\colon\varphi\in\Gamma(T^*\mathbb{M}),\|\varphi \|\leq 1\Big\}
\end{equation}
denotes the variation of the function $f$. Here, $\Gamma(T^*\mathbb{M})$ corresponds to the space of differential 1-forms. In analogy to the Euclidean setting, the variation of a function can also be expressed by approximation by Lipschitz functions, giving rise to a further characterization of $BV(\mathbb{M})$ in that
\begin{equation}\label{E:BV_Lip_char}
\|Df\|(\mathbb{M})=\inf\Big\{\liminf_{k\to\infty}\int_{\mathbb{M}}\|\nabla f_k\|\,d\mu\colon \{f_k\}_k\text{ loc. Lipschitz and }\|f_k-f\|_{L^1_{\rm loc}(\mathbb{M},\mu)}\to 0\Big\}
\end{equation}
see e.g.~\cite[Definition 3.1]{Mir03}. The equivalence of both spaces is not stated explicitly there, but it follows for instance from the characterization~\eqref{E:BV_KS_char}, see~\cite{MMS16}. We also refer to~\cite{AdM14} for further versions of~\eqref{E:BV_Lip_char} in the context of metric measure spaces.
\subsection{Korevaar-Schoen class and fractional seminorms}
In their seminal work~\cite{KS93}, Korevaar and Schoen provided a metric characterization of BV functions on Riemann domains in terms of asymptotic behavior of a near-diagonal energy functional. This was later extended in~\cite{MMS16} to complete metric measure spaces satisfying 1-Poincar\'e inequality as
\begin{equation}\label{E:BV_KS_char}
\small
BV(\mathbb{M})=\Big\{f\in L^1(\mathbb{M},\mu)\colon\liminf_{r\to 0^+}\frac{1}{r}\int_{\mathbb{M}}\int_{B(x,r)}\frac{|f(x)-f(y)|}{\sqrt{\mu(B(x,r))}\sqrt{\mu(B(y,r))}}\,d\mu(y)d\mu(x)<\infty\Big\}.
\end{equation}
In general, the limit above may not exist however it is possible to prove, see~\cite[Remark 3.2]{MMS16}, that the $\liminf$ is comparable to the total variation $\|Df\|(\mathbb{M})$. 

\medskip

In a somewhat similar flavor, it has recently been proved in~\cite[Theorem 1.1]{KM19} that the total variation can also be expressed by means of fractional derivatives in the sense that
\begin{equation}\label{E:BV_radial_char}
\|Df\|(\mathbb{M})=\liminf_{r\to 0^+}\int_{\mathbb{M}}\int_{\mathbb{M}}\frac{|f(x)-f(y)|}{d(x,y)}\rho_r(d(x,y))\,d\mu(x)d\mu(y),
\end{equation}
where $\{\rho_r\}_{r>0}$ is a family of radial mollifiers. The analogous statement in the Euclidean setting had originally been asked by Bourgain-Brezis-Mironescu~\cite{BBM01} and answered in~\cite{Dav02}. These results are valid also for $p>1$ and the main result in the present offers recovers them in a shorter fashion.
\subsection{Heat semigroup characterization}
Under the assumption of Ricci curvature bounded from below, a De Giorgi type characterization of $BV(\mathbb{M})$ in terms of the heat semigroup was obtained in~\cite{CM07,MPPP07b} by showing that
\begin{equation}\label{E:BV_sg_char}
\|Df\|(\mathbb{M})=\lim_{t\to 0^+}\int_{\mathbb{M}}|\nabla P_tf(x)|\,d\mu(x).
\end{equation}
Further extensions to different types of manifolds have appeared in~\cite{GP15} and to more general metric measure spaces in~\cite[Section 5]{MMS16}. 

\medskip

The characterization provided in Theorem~\ref{main} arises from the following result proved in~\cite[Theorem 4.4]{ABCRST2} in the context of Dirichlet spaces with Gaussian heat kernel estimates: There exist constants $C,c>0$ such that for any $f\in BV(\mathbb{M})$, 
\begin{multline}\label{E:BV2_limsup}
c\limsup_{t\to 0^+}\frac{1}{\sqrt{t}}\int_{\mathbb{M}}P_t(|f-f(x)|)(x)\,d\mu(x)\leq \|Df\|(\mathbb{M})\\
\leq C\liminf_{t\to 0^+}\frac{1}{\sqrt{t}}\int_{\mathbb{M}}P_t(|f-f(x)|)(x)\,d\mu(x).
\end{multline}
It is noteworthy to mention the open question whether in general the existence of the limsup implies the existence of the limit. This fact is true in the Euclidean setting~\cite[Remark 3.5]{MPPP07a} as a result of its validity in the case of indicator functions of sets of finite perimeter studied by Ledoux in~\cite{Led94}, see also~\cite{Pre04}.
In view of~\eqref{E:BV2_limsup}, we  have
\begin{equation}\label{E:BV_BV2_char}
BV(\mathbb{M})=\Big\{f\in L^1(\mathbb{M},\mu)\colon\limsup_{t\to 0^+}\frac{1}{\sqrt{t}}\int_{\mathbb{M}}P_t(|f-f(x)|)(x)\,d\mu(x)<\infty\Big\}.
\end{equation}
Theorem~\ref{main} in the present paper thus settles the question of convergence of the limit above to the variation of $f$. 
Especially in Section~\ref{S:Analytic_approach} we will also work with the associated heat kernel $p_t(x,y)$ characterization
\begin{equation}\label{E:BV2_Gaussian}
\limsup_{t \to 0^+} \frac{1}{\sqrt{t}}\int_{\mathbb{M}}\int_{\mathbb{M}}|f(x)-f(y)| p_t(x,y) \,d\mu(y)\,d\mu(x) <+\infty
\end{equation}
if and only if $f\in BV(\mathbb{M})$. 

\section{Probabilistic approach}\label{proba approach}

This section proves Theorem~\ref{main} for $f \in W^{1,2}(\mathbb M)$ by using tools from stochastic differential geometry. The whole section can be read independently from section~\ref{S:Analytic_approach}, where Theorem \ref{main} is proved for any $f \in BV(\mathbb M)$ in an analytic manner. 
We thus assume throughout the section that the reader is familiar with tools of stochastic differential geometry such as those found e.g. in~\cite{Hsu02}.

\medskip

In the sequel we denote by $(X_t)_{t\geq 0}$ the symmetric diffusion process generated by $\frac{1}{2} \Delta$, i.e. the Brownian motion on $\mathbb{M}$. Also, the symbol $\circ dX $ will denote the Stratonovitch stochastic derivative and $dX$ the It\^o's stochastic derivative. The latter should not be confused with the exterior derivative of a smooth function $f$ at a point $x \in \mathbb{M}$, which is denoted by $df(x)$. The stochastic parallel transport for the Levi-Civita connection $\nabla$ along the paths of $(X_t)_{t \ge 0}$ will be denoted by $\para_{0,t}$. The map $\para_{0,t}: T_{X_0} \mathbb M \to T_{X_t} \mathbb M$ is an isometry and it is known that the anti-development of $(X_t)_{t \ge 0}$ defined by
 \[
 B_t=\int_0^t \para_{0,s}^{-1} \circ dX_s
 \]
 is a Brownian motion on the tangent space $T_{X_0}\mathbb M$. Consider then the process $\tau_t:T_{X_t}^*\mathbb{M}\rightarrow T^*_{X_0}\mathbb{M}$ which is the solution to the covariant Stratonovitch stochastic differential equation
\begin{equation}\label{tau_t}
\circ d[\tau_t \alpha(X_t)]=\tau_t\left( \nabla_{\circ dX_t}-\frac{1}{2}
\mathfrak{Ric} \,  dt\right) \alpha(X_t),~~\tau_0=\mathbf{Id},
\end{equation}
where $\alpha$ is any smooth one-form and $\mathfrak{Ric}$ denotes the Ricci curvature of $\mathbb{M}$. The Stratonovitch integration by parts formula implies
\begin{equation}\label{tau=M Theta}
\tau_t=\mathcal{M}_{t}\Theta_{t},
\end{equation}
where the process $\Theta_t: T_{X_t}^{*}\mathbb{M}\rightarrow T_{X_0}^{*}\mathbb{M}$ is the solution to the covariant Stratonovitch stochastic differential equation
\begin{equation}\label{Theta equation}
\circ d[\Theta_t \alpha(X_t)]= \Theta_t \nabla_{\circ dX_t} \alpha(X_t),~~\Theta_0=\mathbf{Id},
\end{equation}
and the multiplicative functional $(\mathcal{M}_t)_{t\geq 0}$ is the solution to the ordinary differential equation
\begin{equation}\label{multiplicative function M_t}
\frac{d\mathcal{M}_t}{dt}=-\frac{1}{2}\mathcal{M}_t \, \Theta_t \, \mathfrak{Ric} \, \Theta_t^{-1}, ~~\mathcal{M}_0 =\mathbf{Id}.
\end{equation} 

We will also consider the heat semigroup on one-forms given by
\[
Q_t=e^{t \square},
\]
where $\square$ is the Hodge-de Rham Laplacian on $\mathbb{M}$. By the Feynman-Kac formula, for every $L^2$ one-form $\eta$ we have
\begin{align}\label{FKQ}
Q_{t/2}  \eta (x)=\mathbb{E}_x \left( \tau_t \eta (X_t)  \right),
\end{align}
where $\mathbb{E}_x$ denotes the conditional expectation given $X_0=x$. 

\subsection{Smooth  case}

We first note that the proof of Theorem \ref{main} is relatively straightforward if $f \in C^2(\mathbb{M})$.

\begin{theorem}\label{main_smooth}
For any $f \in C^2( {\mathbb M})$,
\[
\lim_{t \to 0^+} \frac{1}{\sqrt{t}}  \int_{\mathbb M} P_t (|f-f(x)|)(x) d\mu(x) =  \frac{ 2 }{\sqrt
{\pi}} \int_{\mathbb{M}} \| d f (x) \| d\mu(x).
\]

\end{theorem}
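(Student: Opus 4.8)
The plan is to pass to the probabilistic side and exploit that $(X_t)_{t\ge 0}$ is generated by $\tfrac12\Delta$, so that $P_tg(x)=\mathbb{E}_x[g(X_{2t})]$ and hence
\[
P_t(|f-f(x)|)(x)=\mathbb{E}_x\big[\,|f(X_{2t})-f(X_0)|\,\big].
\]
First I would isolate the Gaussian leading order. Writing $B$ for the anti-development of $X$ (a Brownian motion on $T_x\mathbb{M}$ when $X_0=x$), It\^o's formula yields the exact decomposition
\[
f(X_{2t})-f(X_0)=\int_0^{2t}\big\langle \para_{0,u}^{-1}\nabla f(X_u),\,dB_u\big\rangle+\frac12\int_0^{2t}\Delta f(X_u)\,du,
\]
whose martingale part, to leading order, is the linear functional $\langle \nabla f(x),B_{2t}\rangle$. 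Since $B_{2t}$ is a centered Gaussian vector on $T_x\mathbb{M}$ with covariance $2t\,\mathrm{Id}$, the variable $\langle\nabla f(x),B_{2t}\rangle$ is $\mathcal{N}(0,2t\|\nabla f(x)\|^2)$, so that
\[
\mathbb{E}_x\big[\,|\langle\nabla f(x),B_{2t}\rangle|\,\big]=\sqrt{2t}\,\|\nabla f(x)\|\,\mathbb{E}[|Z|]=\frac{2\sqrt t}{\sqrt\pi}\,\|\nabla f(x)\|,
\]
where $Z\sim\mathcal{N}(0,1)$ and $\mathbb{E}[|Z|]=\sqrt{2/\pi}$. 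Dividing by $\sqrt t$ already produces the claimed constant $\tfrac{2}{\sqrt\pi}\|df(x)\|$ with no remaining $t$-dependence.

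The second step is to show that replacing $f(X_{2t})-f(X_0)$ by its Gaussian surrogate costs only $o(\sqrt t)$. Using $\big||a|-|b|\big|\le|a-b|$, it suffices to bound
\[
\mathbb{E}_x\Big[\Big|\int_0^{2t}\big\langle \para_{0,u}^{-1}\nabla f(X_u)-\nabla f(x),\,dB_u\big\rangle\Big|\Big]+\frac12\,\mathbb{E}_x\Big[\int_0^{2t}|\Delta f(X_u)|\,du\Big].
\]
The drift term is trivially $O(t)$ because $\Delta f$ is bounded on the compact manifold. For the martingale term I would invoke the It\^o isometry together with the pathwise Lipschitz estimate $\|\para_{0,u}^{-1}\nabla f(X_u)-\nabla f(x)\|\le C\,d(X_u,X_0)$, valid with $C$ controlled by $\sup_{\mathbb M}\|\mathrm{Hess}\,f\|$ and the curvature, and the standard short-time moment bound $\mathbb{E}_x[d(X_u,X_0)^2]\le C'u$. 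These give a second moment of order $\int_0^{2t}u\,du=O(t^2)$, hence an $L^1$ bound of order $t$. Altogether the error is $O(t)$ uniformly in $x\in\mathbb M$.

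Finally I would integrate in $x$ over $\mathbb M$. Since $\mu(\mathbb M)=1$ and all the constants above are uniform in $x$, the error integrates to $O(t)$, so
\[
\frac{1}{\sqrt t}\int_{\mathbb M}P_t(|f-f(x)|)(x)\,d\mu(x)=\frac{2}{\sqrt\pi}\int_{\mathbb M}\|\nabla f(x)\|\,d\mu(x)+O(\sqrt t),
\]
and letting $t\to0^+$ gives the theorem upon recalling $\|\nabla f\|=\|df\|$. I expect the main obstacle to be the \emph{uniformity in $x$} of the error estimate: one must ensure that the Lipschitz constant for $u\mapsto\para_{0,u}^{-1}\nabla f(X_u)$ and the moment bound $\mathbb{E}_x[d(X_u,X_0)^2]\le C'u$ hold with constants independent of the base point, which is precisely where compactness (global bounds on $\mathrm{Hess}\,f$ and on the Ricci curvature) is used.
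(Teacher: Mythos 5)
Your outline is essentially the paper's proof: the same It\^o decomposition of $f(X_{2t})-f(X_0)$ into a drift part and a stochastic integral, the same reduction of the martingale term to the frozen-coefficient integral $\langle \nabla f(x),B_{2t}\rangle$, and the same computation of the first absolute moment of a centered Gaussian to produce the constant $2/\sqrt{\pi}$. The one step I would not accept as written is the pathwise Lipschitz estimate $\|\para_{0,u}^{-1}\nabla f(X_u)-\nabla f(x)\|\le C\,d(X_u,X_0)$. Here $\para_{0,u}$ is the stochastic parallel transport \emph{along the Brownian path}, not along a minimizing geodesic from $X_0$ to $X_u$; on a curved manifold a path can return close to its starting point while the transported frame has picked up a holonomy rotation of order one (proportional to the swept area, not to $d(X_u,X_0)$), so this inequality fails pathwise whenever $\nabla f(x)\neq 0$. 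The correct quantitative route is to differentiate $u\mapsto\para_{0,u}^{-1}\nabla f(X_u)$ via the covariant It\^o/Stratonovich equation, which gives $\mathbb{E}_x\big[\|\para_{0,u}^{-1}\nabla f(X_u)-\nabla f(x)\|^2\big]\le Cu$ directly from the boundedness of $\mathrm{Hess}\,f$ and the curvature --- but note that the Stratonovich-to-It\^o correction then involves derivatives of $\mathrm{Hess}\,f$, i.e.\ more regularity than $C^2$, unless one argues by density. The paper sidesteps all of this: after the It\^o isometry it only needs
\begin{equation*}
\frac{1}{t}\int_{\mathbb M}\mathbb{E}_x\Big(\int_0^t\|\Theta_s\,df(X_s)-df(x)\|^2\,ds\Big)\,d\mu(x)\xrightarrow[t\to 0^+]{}0,
\end{equation*}
which follows from dominated convergence using only that $df$ is continuous and bounded and that $\Theta_s\,df(X_s)\to df(x)$ a.s.\ as $s\to 0$. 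This yields $o(\sqrt t)$ rather than your claimed $O(t)$, but that is all the theorem requires; I recommend replacing your Lipschitz step by this softer argument.
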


\begin{proof}
Let $f \in C^2(\mathbb{M})$. From It\^o's formula, one has
\[
f(X_t)=f(X_0)+\frac{1}{2} \int_0^t \Delta f (X_s) ds +\int_0^t \langle d f (X_s) , \para_{0,s}  dB_s \rangle.
\]
Therefore,
\begin{align*}
\int_{\mathbb M} P_{t/2} (|f-f(x)|)(x) d\mu(x)&=  \int_{\mathbb M}  \mathbb{E}_x(|f(X_t)-f(x)|) d\mu(x) \\
 &=\int_{\mathbb M}  \mathbb{E}_x\left(\left|\frac{1}{2} \int_0^t \Delta f (X_s) ds +\int_0^t \langle d f (X_s) , \para_{0,s}  dB_s \rangle\right|\right) d\mu(x).
\end{align*}
Since $f \in C^2(\mathbb{M})$, the dominated convergence theorem implies
\[
\lim_{t \to 0} \frac{1}{t} \int_{\mathbb M}  \mathbb{E}_x\left(\left| \int_0^t \Delta f (X_s) ds \right|\right) d\mu(x)= \int_{\mathbb M} | \Delta f (x) | d\mu(x).
\]
Similarly, one has
\begin{align*}
 & \int_{\mathbb M}  \mathbb{E}_x\left(\left|\int_0^t \langle d f (X_s) , \para_{0,s}  dB_s \rangle -\int_0^t \langle d f (x) ,   dB_s \rangle \right|\right) d\mu(x) \\
 = & \int_{\mathbb M}  \mathbb{E}_x\left(\left|\int_0^t \langle \Theta_s d f (X_s) - d f (x) ,   dB_s \rangle \right|\right) d\mu(x) \\
\le &\left( \int_{\mathbb M}  \mathbb{E}_x\left(\left|\int_0^t \langle \Theta_s d f (X_s) - d f (x) ,   dB_s \rangle \right|^2\right) d\mu(x)\right)^{1/2} \\
= &\left( \int_{\mathbb M}  \mathbb{E}_x\left(\int_0^t \| \Theta_s d f (X_s) - d f (x)\|^2ds\right) d\mu(x)\right)^{1/2}
\end{align*}
and thus, again by the dominated convergence theorem,
\begin{align*}
\lim_{t \to 0^+} \frac{1}{\sqrt{t}} \int_{\mathbb M}  \mathbb{E}_x\left(\left|\int_0^t \langle d f (X_s) , \para_{0,s}  dB_s \rangle -\int_0^t \langle d f (x) ,   dB_s \rangle \right|\right) d\mu(x)=0.
\end{align*}
Therefore, one concludes
\[
\lim_{t \to 0} \frac{1}{\sqrt{t}}  \int_{\mathbb M} P_t (|f-f(x)|)(x) d\mu(x)=\lim_{t \to 0} \frac{1}{\sqrt{t}} \int_{\mathbb M}  \mathbb{E}_x\left(\left|\int_0^{2t} \langle d f (x) ,   dB_s \rangle \right|\right) d\mu(x).
\]
We then note that
\[
\int_{\mathbb M}  \mathbb{E}_x\left(\left|\int_0^{2t} \langle d f (x) ,   dB_s \rangle \right|\right) d\mu(x)=\int_{\mathbb M}  \mathbb{E}_x\left(\left| \langle d f (x) ,   B_{2t} \rangle \right|\right) d\mu(x)
\]
and that, under $\mathbb{P}_x$, the random variable $\langle  d  f (x), B_{2t} \rangle$ is a Gaussian random variable with mean zero and variance $2t \| d  f (x) \|^2$. Therefore
\begin{equation}\label{E:Gaussian_exp}
\mathbb{E}_x \left(| \langle  d  f (x), B_{2t} \rangle | \right)=\sqrt{2t} \| d  f (x) \|\mathbb{E}_x \left(|N |\right),
\end{equation}
where $N$ is a Gaussian random variable with mean zero and variance 1. Since
\[
\mathbb{E}_x \left(|N |\right)=\frac{1}{\sqrt{2\pi}}\int_{-\infty}^{+\infty} |x | e^{-x^2/2} dx = \frac{2}{\sqrt{2\pi}}\int_{0}^{+\infty} x  e^{-x^2/2} dx= \frac{2}{\sqrt{2\pi}},
\]
we conclude
\[
\lim_{t \to 0} \frac{1}{\sqrt{t}}  \int_{\mathbb M} P_t (|f-f(x)|)(x) d\mu(x) =  \frac{ 2 }{\sqrt{\pi}} \int_{\mathbb{M}} \| d f (x) \| d\mu(x).
\]
\end{proof}

\subsection{General $W^{1,2}$ case}
Lowering the regularity of the function $f$ to $W^{1,2}(\mathbb M)$ makes the proof of Theorem~\ref{main} significantly more involved. We subdivide it into several lemmas which may also be useful on their own.
\begin{lemma}\label{lem1}
For any $f \in W^{1,2}(\mathbb M)$,
\[
\lim_{t \to 0} \frac{1}{\sqrt{t}} \| P_t f - f \|_{L^1(\mathbb{M},\mu)}=0.
\]
\end{lemma}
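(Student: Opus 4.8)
The plan is to prove the stronger statement that $\|P_t f - f\|_{L^2(\mathbb{M},\mu)} = o(\sqrt{t})$ as $t\to 0^+$ and then deduce the $L^1$ claim for free. Since $\mu(\mathbb{M})=1$, the Cauchy--Schwarz inequality gives $\|g\|_{L^1(\mathbb{M},\mu)} \le \|g\|_{L^2(\mathbb{M},\mu)}$ for every $g \in L^2(\mathbb{M},\mu)$, so that
\[
\frac{1}{\sqrt{t}}\,\|P_t f - f\|_{L^1(\mathbb{M},\mu)} \le \frac{1}{\sqrt{t}}\,\|P_t f - f\|_{L^2(\mathbb{M},\mu)},
\]
and it suffices to show that the right-hand side tends to $0$. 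This reduction is legitimate because $f \in W^{1,2}(\mathbb{M}) \subset L^2(\mathbb{M},\mu)$.

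First I would invoke the spectral theorem for the self-adjoint operator $-\Delta$, writing $-\Delta = \int_0^\infty \lambda\, dE_\lambda$ so that $P_t = \int_0^\infty e^{-t\lambda}\, dE_\lambda$. For $f \in L^2(\mathbb{M},\mu)$ the spectral calculus then yields
\[
\frac{1}{t}\,\|P_t f - f\|_{L^2(\mathbb{M},\mu)}^2 = \int_0^\infty \frac{(1-e^{-t\lambda})^2}{t}\, d\langle E_\lambda f, f\rangle.
\]
The elementary inequalities $1 - e^{-t\lambda} \le t\lambda$ and $1 - e^{-t\lambda} \le 1$, valid for all $t>0$ and $\lambda \ge 0$, combine to give the uniform bound $\frac{(1-e^{-t\lambda})^2}{t} \le \lambda$, while for each fixed $\lambda$ the integrand behaves like $t\lambda^2$ and hence vanishes as $t \to 0^+$.

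The final step is an application of dominated convergence. The dominating function $\lambda \mapsto \lambda$ is integrable against the spectral measure $d\langle E_\lambda f, f\rangle$ precisely because $\int_0^\infty \lambda\, d\langle E_\lambda f, f\rangle = \mathcal{E}(f,f)$, which is finite exactly when $f$ lies in the domain of the energy form, that is, in $W^{1,2}(\mathbb{M})$ as recalled in Section~\ref{S:Prelims}. This identification $\mathrm{Dom}(\mathcal{E}) = W^{1,2}(\mathbb{M})$ is the only place where the hypothesis $f \in W^{1,2}(\mathbb{M})$ enters. Dominated convergence then forces the integral to tend to $0$, giving $\|P_t f - f\|_{L^2(\mathbb{M},\mu)}^2 = o(t)$ and hence the claim.

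I expect no serious obstacle here: the single point requiring care is the matching of the spectral domain $\mathrm{Dom}(\sqrt{-\Delta})$ with $W^{1,2}(\mathbb{M})$ together with the identity $\mathcal{E}(f,f) = \int_0^\infty \lambda\, d\langle E_\lambda f,f\rangle$, both standard facts for the Dirichlet form generated by $\Delta$. A purely real-variable alternative would be to approximate $f$ in $W^{1,2}(\mathbb{M})$ by smooth functions $f_k$, for which $\|P_t f_k - f_k\|_{L^1(\mathbb{M},\mu)} = O(t)$ via $P_t f_k - f_k = \int_0^t P_s \Delta f_k\, ds$; however, controlling the tail $\frac{1}{\sqrt t}\|P_t(f-f_k)-(f-f_k)\|_{L^1(\mathbb{M},\mu)}$ uniformly in $t$ reduces again to the spectral estimate above, so I regard the spectral route as the most economical.
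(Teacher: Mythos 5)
Your proof is correct and follows essentially the same route as the paper's: reduce to the $L^2$ norm via $\mu(\mathbb{M})=1$, apply the spectral theorem to $-\Delta$, dominate $\frac{(1-e^{-t\lambda})^2}{t}$ by $\lambda$ (the paper phrases this as a uniform bound on $\frac{(1-e^{-t\lambda})^2}{t\lambda}$), and conclude by dominated convergence using $\int_0^\infty \lambda\, d\langle E_\lambda f,f\rangle=\mathcal{E}(f,f)<\infty$ for $f\in W^{1,2}(\mathbb{M})$. The only cosmetic difference is that the paper writes the spectral theorem in multiplication-operator form on a measure space $(\hat{\mathbb{M}},\hat{\mu})$ rather than via the projection-valued measure $E_\lambda$.
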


\begin{proof}
Since the volume measure $\mu$ is normalized, 
\begin{equation}\label{E:lem1_01}
\| P_t f - f \|_{L^1(\mathbb{M},\mu)} \le \| P_t f - f \|_{L^2(\mathbb{M},\mu)}.
\end{equation}
The operator $\Delta$ is a non-positive, densely defined, self-adjoint operator on $L^2(\mathbb M, \mu)$ and the spectral theorem thus yields a measure space $(\hat{\mathbb{M}}, \hat{\mu})$, a unitary map $U:  L^2(\mathbb M, \mu) \rightarrow L^2(\hat{\mathbb M}, \hat{\mu})$ and a non-negative real valued measurable function $\lambda$ on $\hat{\mathbb{M}}$ such that
\[
(U \Delta U^{-1}) g (m) =-\lambda (m)  g(m), 
\]
for a.e. $m  \in \hat{\mathbb{M}}$, and $g \in L^2(\hat{\mathbb M}, \hat{\mu})$ such that $U^{-1}g \in {\rm dom}(\Delta )$. We have then 
\[
(U P_t U^{-1}) g (m) =e^{-t\lambda (m)}  g(m)
\]
for a.e. $m\in \hat{\mathbb{M}}$, and $g \in L^2(\hat{\mathbb M}, \hat{\mu})$. In particular, for any $f \in L^2(\mathbb M, \mu)$ we have
\[
\| P_t f - f \|^2_{L^2(\mathbb{M},\mu)}= \int_{\hat{\mathbb M}} (1-e^{-t\lambda (m)})^2 \hat{f} (m)^2 d \hat{\mu}(m),
\]
where $\hat{f}=Uf$ and hence deduce
\[
\frac{1}{t} \| P_t f - f \|^2_{L^2(\mathbb{M},\mu)}= \int_{\hat{\mathbb M}} \frac{(1-e^{-t\lambda (m)})^2}{t\lambda (m) }   \lambda (m) \hat{f} (m)^2 d \hat{\mu}(m).
\]
Moreover, for every $t>0$
\[
\sup_{\lambda >0}  \frac{(1-e^{-t\lambda })^2}{t\lambda  }= \sup_{\lambda >0}  \frac{(1-e^{-\lambda })^2}{\lambda  } <+\infty
\]
and $f \in W^{1,2}(\mathbb M)$ implies that $\int_{\hat{\mathbb M}}   \lambda (m) \hat{f} (m)^2 d \hat{\mu}(m) <+\infty$. Since a.e. $m$ it holds that $ \frac{(1-e^{-t\lambda (m)})^2}{t\lambda (m) } \to 0$ as $t\to 0$, by virtue of the dominated convergence theorem we obtain
\[
\lim_{t \to 0} \frac{1}{t} \| P_t f - f \|^2_{L^2(\mathbb{M},\mu)}=0,
\]
which together with~\eqref{E:lem1_01} yields the conclusion.
\end{proof}

\begin{corollary}
For any $f \in W^{1,2}(\mathbb M)$,
\[
\lim_{t \to 0} \frac{1}{\sqrt{t}} \left | \int_{\mathbb M} P_t (|f-f(x)|)(x) d\mu(x) - \int_{\mathbb M} P_t (|f-P_tf(x)|)(x) d\mu(x) \right| =  0.
\]
\end{corollary}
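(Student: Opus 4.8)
The plan is to bound the difference directly via Lemma~\ref{lem1}, using that replacing the base point $f(x)$ by its regularization $P_tf(x)$ inside the absolute value perturbs the integrand by at most $|f(x)-P_tf(x)|$. First I would write both terms as integrals against the heat kernel,
\[
P_t(|f-f(x)|)(x)=\int_{\mathbb M}p_t(x,y)|f(y)-f(x)|\,d\mu(y),\qquad P_t(|f-P_tf(x)|)(x)=\int_{\mathbb M}p_t(x,y)|f(y)-P_tf(x)|\,d\mu(y),
\]
so that the quantity under the limit becomes a single double integral over $\mathbb M\times\mathbb M$. Since $f\in W^{1,2}(\mathbb M)\subset L^1(\mathbb M,\mu)$ and $P_t$ is bounded, all these integrals are finite and Fubini applies.

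Next I would apply the reverse triangle inequality pointwise: for fixed $x,y\in\mathbb M$,
\[
\big|\,|f(y)-f(x)|-|f(y)-P_tf(x)|\,\big|\le |f(x)-P_tf(x)|,
\]
whose right-hand side is independent of $y$. Because $(P_t)_{t\ge 0}$ is a conservative Markov semigroup on the compact manifold $\mathbb M$, one has $\int_{\mathbb M}p_t(x,y)\,d\mu(y)=P_t1(x)=1$, so integrating first in $y$ and then in $x$ collapses the double integral and yields
\[
\left|\int_{\mathbb M}P_t(|f-f(x)|)(x)\,d\mu(x)-\int_{\mathbb M}P_t(|f-P_tf(x)|)(x)\,d\mu(x)\right|\le \int_{\mathbb M}|f(x)-P_tf(x)|\,d\mu(x)=\|P_tf-f\|_{L^1(\mathbb M,\mu)}.
\]

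Finally, dividing by $\sqrt t$ and letting $t\to 0^+$, the right-hand side vanishes by Lemma~\ref{lem1}, which gives the claim. I do not expect a serious obstacle here: the only two points requiring attention are the correct use of the reverse triangle inequality (to obtain a bound uniform in $y$) and the conservativeness $P_t1=1$, both of which are immediate in the compact setting. The corollary thus reduces essentially to Lemma~\ref{lem1}, and its purpose is to let us interchange freely the base point $f(x)$ with its heat-regularization $P_tf(x)$ in the subsequent analysis without affecting the $1/\sqrt t$-normalized limit.
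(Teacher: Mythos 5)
Your argument is correct and is essentially identical to the paper's proof: both reduce the difference via the reverse triangle inequality to $\|P_tf-f\|_{L^1(\mathbb M,\mu)}$ using conservativeness of $P_t$, and then invoke Lemma~\ref{lem1}. Writing the semigroup out with its kernel is a purely notational difference.
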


\begin{proof}
Indeed, this follows from Lemma~\ref{lem1} since
\begin{align*}
 &  \left | \int_{\mathbb M} P_t (|f-f(x)|)(x) d\mu(x) - \int_{\mathbb M} P_t (|f-P_tf(x)|)(x) d\mu(x) \right| \\ \le 
    &  \int_{\mathbb M} P_t ( \left | \, |f-f(x)|)(x)- |f-P_tf(x)| \, \right|)(x) d\mu(x) \\
 \le    & \int_{\mathbb M} P_t ( \left | f(x)-P_tf(x) \right|)(x) d\mu(x) \\
 = & \int_{\mathbb M}  \left | f(x)-P_tf(x) \right| d\mu(x) \\
 = &  \| P_t f - f \|_{L^1(\mathbb{M},\mu)}.
\end{align*}
\end{proof}

In order to prove Theorem \ref{main} in the case $f \in  W^{1,2}(\mathbb M)$, it is therefore enough to prove that for $f \in W^{1,2}(\mathbb M)$
 \[
\lim_{t \to 0} t^{-1/2}  \int_{\mathbb M} P_t (|f-P_tf(x)|)(x) d\mu(x) =  \frac{ 2 }{\sqrt
{\pi}} \| Df \| (\mathbb M).
\]
We are now ready to establish the key probabilistic representation.

\begin{lemma}
For any $f \in W^{1,2}(\mathbb M)$ and $t \ge 0$,
\[
 \int_{\mathbb M} P_{t/2} (|f-P_{t/2}f(x)|)(x) d\mu(x)= \int_{\mathbb M} \mathbb{E}_x \left( \left| \int_0^t \langle \Theta_s d P_{t/2-s/2} f (X_s) ,   dB_s \rangle \right| \right) d\mu(x).
 \]
\end{lemma}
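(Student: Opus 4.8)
The plan is to read both sides through the stationary Brownian motion $(X_s)_{s\ge 0}$ generated by $\tfrac12\Delta$ and to recognize the integrand as the martingale part of the process $s\mapsto P_{(t-s)/2}f(X_s)$. Since $P_t=e^{t\Delta}$, one has $\mathbb{E}_x(g(X_t))=P_{t/2}g(x)$, so the left-hand side equals
\[
\int_{\mathbb M}\mathbb{E}_x\big(|f(X_t)-P_{t/2}f(x)|\big)\,d\mu(x)=\mathbb{E}_\mu\big(|f(X_t)-P_{t/2}f(X_0)|\big),
\]
where $\mathbb{E}_\mu=\int_{\mathbb M}\mathbb{E}_x(\cdot)\,d\mu(x)$ denotes expectation for the Brownian motion started from $\mu$. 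It therefore suffices to prove the $\mathbb{P}_\mu$-almost sure identity
\[
f(X_t)-P_{t/2}f(X_0)=\int_0^t\langle\Theta_s\,dP_{(t-s)/2}f(X_s),\,dB_s\rangle,
\]
after which taking $|\cdot|$ and $\mathbb{E}_\mu$ yields the claim.

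First I would set $N_s=P_{(t-s)/2}f(X_s)$ with $u(s,\cdot)=P_{(t-s)/2}f$. For $s<t$ the function $u(s,\cdot)$ is smooth by the regularizing property of the heat semigroup and satisfies $\partial_s u=-\tfrac12\Delta u$. Applying Itô's formula exactly as in the proof of Theorem~\ref{main_smooth} to the time-dependent function $u$ along $(X_s)$, the drift terms cancel because $\partial_s u+\tfrac12\Delta u=0$, leaving
\[
N_s=P_{t/2}f(X_0)+\int_0^s\langle dP_{(t-r)/2}f(X_r),\,\para_{0,r}\,dB_r\rangle,\qquad 0\le s<t.
\]
As in the smooth case, the adjoint relation $\langle\alpha,\para_{0,r}v\rangle=\langle\Theta_r\alpha,v\rangle$ between the parallel transport on vectors and the covector transport $\Theta_r$ rewrites the integrand as $\langle\Theta_r\,dP_{(t-r)/2}f(X_r),dB_r\rangle$, so the representation holds for every $s<t$.

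The one delicate step, and the main obstacle, is to pass to the limit $s\to t$, where the smoothing of $f$ is lost and Itô's formula cannot be applied to $f$ directly. I would control the terminal behaviour through the quadratic variation: since $\Theta_r$ is an isometry, $\mu$ is invariant and $\mu(\mathbb M)=1$,
\[
\mathbb{E}_\mu\int_0^t\|\Theta_r\,dP_{(t-r)/2}f(X_r)\|^2\,dr=\int_0^t\int_{\mathbb M}\|dP_{(t-r)/2}f\|^2\,d\mu\,dr=\int_0^t\mathcal{E}\big(P_{(t-r)/2}f,P_{(t-r)/2}f\big)\,dr\le t\,\mathcal{E}(f,f)<\infty,
\]
using that the energy is nonincreasing along the semigroup. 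Hence the stochastic integral is a genuine $L^2(\mathbb{P}_\mu)$-martingale on the closed interval $[0,t]$ and converges in $L^2(\mathbb{P}_\mu)$ as $s\to t$. On the other hand $N_s\to f(X_t)$ in $L^1(\mathbb{P}_\mu)$: indeed $\mathbb{E}_\mu|N_s-f(X_s)|=\|P_{(t-s)/2}f-f\|_{L^1(\mathbb M,\mu)}\to 0$ by strong continuity of the semigroup, while $\mathbb{E}_\mu|f(X_s)-f(X_t)|^2=2\|f\|_{L^2(\mathbb M,\mu)}^2-2\langle f,P_{(t-s)/2}f\rangle\to 0$. Matching the two limits identifies the terminal value $N_t=f(X_t)$, establishes the displayed representation $\mathbb{P}_\mu$-almost surely, and completes the proof.
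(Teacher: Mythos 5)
Your proof is correct and follows essentially the same route as the paper: both apply It\^o's formula to the space--time martingale $s\mapsto P_{(t-s)/2}f(X_s)$, identify its endpoints as $P_{t/2}f(X_0)$ and $f(X_t)$, and rewrite the integrand via the covector transport $\Theta_s$. The only difference is that you carefully justify the passage $s\to t$ (via $L^2$-boundedness of the stochastic integral and $L^1$-convergence of $N_s$ to $f(X_t)$), a step the paper asserts without detail; this is a welcome addition rather than a divergence.
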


\begin{proof}
Let $f \in W^{1,2}(\mathbb M)$. Observe first that due to the ellipticity of the operator $\Delta$, the semigroup $P_t$ has a smooth heat kernel and therefore the function $(t,x) \mapsto P_t f (x)$ is smooth on $(0,+\infty) \times \mathbb M$. Then, fix $t>0$ and consider the process $M_s=(P_{t/2-s/2} f)(X_s)$. It\^o's formula shows that $(M_s)_{0 \le s \le t}$ is a martingale such that
\[
M_s =M_0+\int_0^s \langle d P_{t/2-u/2} f (X_u), \para_{0,u}  dB_u \rangle.
\]
Therefore
\[
 \int_{\mathbb M} \mathbb{E}_x \left( \left| M_s -M_0 \right| \right) = \int_{\mathbb M} \mathbb{E}_x \left( \left| \int_0^s \langle d P_{t/2-u/2} f (X_u), \para_{0,u}  dB_u \rangle \right| \right) .
\]
Applying this identity for $s=t$ and observing that $M_0=P_{t/2}f(X_0)$ and $M_t=f(X_t)$ yields
\[
\int_{\mathbb M}  \mathbb{E}_x(|f(X_t)-P_{t/2}f(x)|)(x) d\mu(x)= \int_{\mathbb M} \mathbb{E}_x \left( \left| \int_0^t \langle  d P_{t/2-s/2} f (X_s) ,  \para_{0,s}  dB_s \rangle \right| \right) d\mu(x).
 \]
 We conclude by noticing that
\[
 \int_{\mathbb M}  \mathbb{E}_x(|f(X_t)-P_{t/2}f(x)|)(x) d\mu(x)=  \int_{\mathbb M} P_{t/2} (|f-P_{t/2}f(x)|)(x) d\mu(x)
\]
and that
\[
\int_0^t \langle  d P_{t/2-s/2} f (X_s) ,  \para_{0,s}  dB_s \rangle  = \int_0^t \langle  \Theta_s d P_{t/2-s/2} f (X_s) ,    dB_s \rangle.
\]
\end{proof}

\begin{lemma}\label{lem4}
For any $f \in W^{1,2}(\mathbb M)$,
\[
\lim_{t \to 0} \frac{1}{\sqrt{t}} \left| \int_{\mathbb M} \mathbb{E}_x \left( \left| \int_0^t \langle \Theta_s d P_{t/2-s/2} f (X_s) ,   dB_s \rangle \right| - \left| \int_0^t \langle \tau_s d P_{t/2-s/2} f (X_s) ,   dB_s \rangle \right| \right) d\mu(x)  \right| =0.
\]
\end{lemma}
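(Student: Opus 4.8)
The plan is to dominate the quantity by the $L^1$-norm of the difference of the two stochastic integrals and then to exploit the fact that $\tau_s$ and $\Theta_s$ agree to first order in $s$. Set
\[
I_t := \int_0^t \langle \Theta_s\, d P_{t/2-s/2} f (X_s),\, dB_s \rangle, \qquad J_t := \int_0^t \langle \tau_s\, d P_{t/2-s/2} f (X_s),\, dB_s \rangle .
\]
By the reverse triangle inequality $\big| |I_t| - |J_t| \big| \le |I_t - J_t|$, so it suffices to prove that $t^{-1/2}\int_{\mathbb M}\mathbb E_x(|I_t-J_t|)\,d\mu(x)\to 0$. Invoking the factorization~\eqref{tau=M Theta}, $\tau_s=\mathcal M_s\Theta_s$, the difference of the integrands equals $(\mathbf{Id}-\mathcal M_s)\Theta_s\,dP_{t/2-s/2}f(X_s)$, whence
\[
I_t-J_t=\int_0^t \big\langle (\mathbf{Id}-\mathcal M_s)\Theta_s\, d P_{t/2-s/2} f (X_s),\, dB_s\big\rangle .
\]

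The first key step will be a quantitative smallness estimate for $\mathcal M_s-\mathbf{Id}$. Since $\mathbb M$ is compact, the Ricci curvature is bounded, say $\|\mathfrak{Ric}\|\le K$, and the parallel transport of one-forms $\Theta_s$ and its inverse are isometries, so that $\|\Theta_s\|=\|\Theta_s^{-1}\|=1$. Feeding these bounds into the defining ODE~\eqref{multiplicative function M_t} and applying Gr\"onwall's inequality yields a deterministic, pathwise bound $\|\mathcal M_s-\mathbf{Id}\|\le C s$ for all $s\in[0,t]$, $t\le 1$, with $C$ depending only on $K$. Combined with the isometry of $\Theta_s$, this gives the pointwise estimate $\|(\mathbf{Id}-\mathcal M_s)\Theta_s\,dP_{t/2-s/2}f(X_s)\|\le C s\,\|dP_{t/2-s/2}f(X_s)\|$.

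The second step turns this into the desired decay. By Jensen's inequality and It\^o's isometry,
\[
\mathbb E_x\big(|I_t-J_t|\big)\le \Big(\mathbb E_x\int_0^t \|(\mathbf{Id}-\mathcal M_s)\Theta_s\, dP_{t/2-s/2}f(X_s)\|^2\,ds\Big)^{1/2}\le \Big(C^2\,\mathbb E_x\int_0^t s^2\,\|dP_{t/2-s/2}f(X_s)\|^2\,ds\Big)^{1/2}.
\]
Integrating over $\mathbb M$ and using the Cauchy-Schwarz inequality together with $\mu(\mathbb M)=1$, the invariance of $\mu$ (so that $\int_{\mathbb M}\mathbb E_x(g(X_s))\,d\mu(x)=\int_{\mathbb M}g\,d\mu$) and Fubini's theorem, I would bound
\[
\int_{\mathbb M}\mathbb E_x\big(|I_t-J_t|\big)\,d\mu(x)\le C\Big(\int_0^t s^2\,\mathcal E\big(P_{t/2-s/2}f,P_{t/2-s/2}f\big)\,ds\Big)^{1/2}.
\]
Finally, the energy monotonicity $\mathcal E(P_u f,P_u f)\le \mathcal E(f,f)$, immediate from the spectral representation already used in Lemma~\ref{lem1}, gives $\int_0^t s^2\,\mathcal E(P_{t/2-s/2}f,P_{t/2-s/2}f)\,ds\le \tfrac{t^3}{3}\,\mathcal E(f,f)$, so that $\int_{\mathbb M}\mathbb E_x(|I_t-J_t|)\,d\mu(x)\lesssim t^{3/2}\,\|\nabla f\|_{L^2}$. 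Dividing by $\sqrt t$ and letting $t\to 0$ concludes the argument.

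The main obstacle is not the chain of inequalities but securing the extra power of $t$: the bare It\^o isometry only produces the borderline rate $t^{1/2}$, which would merely match, rather than beat, the normalization $t^{-1/2}$. It is precisely the first-order vanishing $\|\mathcal M_s-\mathbf{Id}\|=O(s)$, a consequence of $\mathcal M_0=\mathbf{Id}$ and the boundedness of the Ricci curvature on the compact manifold $\mathbb M$, that supplies the additional factor $s^2$ under the integral and upgrades the estimate to $t^{3/2}$. A minor point to handle with care is the behaviour of the integrand as $s\to t$, where $P_{t/2-s/2}f\to f$ lies only in $W^{1,2}$; this causes no difficulty because the energy bound $\mathcal E(P_uf,P_uf)\le\mathcal E(f,f)$ is uniform in the time parameter $u\ge 0$.
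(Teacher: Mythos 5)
Your proof is correct and follows essentially the same route as the paper's: reverse triangle inequality, It\^o isometry, and a Gronwall bound on $\|\mathbf{Id}-\mathcal{M}_s\|$ derived from the ODE~\eqref{multiplicative function M_t}. The only (harmless) difference is the final estimate: the paper pulls out the factor $e^{Kt/2}-1=O(t)$ and bounds the remaining integral by $\|f\|_{L^2(\mathbb{M},\mu)}^2$ via the identity $\int_0^t P_{s/2}(\|dP_{t/2-s/2}f\|^2)\,ds=P_{t/2}(f^2)-(P_{t/2}f)^2$, which needs only $f\in L^2$, whereas you keep the factor $s^2$ inside and invoke $\mathcal{E}(P_uf,P_uf)\le\mathcal{E}(f,f)$, which uses $f\in W^{1,2}$ but yields the slightly better rate $O(t)$ after the $t^{-1/2}$ normalization.
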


\begin{proof}
First, we have
\begin{align*}
 &  \left| \int_{\mathbb M} \mathbb{E}_x \left( \left| \int_0^t \langle \Theta_s d P_{t/2-s/2} f (X_s) ,   dB_s \rangle \right| - \left| \int_0^t \langle \tau_s d P_{t/2-s/2} f (X_s) ,   dB_s \rangle \right| \right) d\mu(x)  \right| \\
 \le &\int_{\mathbb M} \mathbb{E}_x \left( \left| \int_0^t \langle (\Theta_s -\tau_s)d P_{t/2-s/2} f (X_s) ,   dB_s \rangle  \right| \right) d\mu(x) \\
 \le & \left( \int_{\mathbb M} \mathbb{E}_x \left( \left| \int_0^t \langle (\Theta_s -\tau_s)d P_{t/2-s/2} f (X_s) ,   dB_s \rangle  \right| ^2\right) d\mu(x)\right)^{1/2} \\
 =& \left( \int_{\mathbb M} \mathbb{E}_x \left(  \int_0^t \left\| (\Theta_s -\tau_s)d P_{t/2-s/2} f (X_s) \right\|^2 ds\right) d\mu(x)\right)^{1/2} \\
 \le & \left( \int_{\mathbb M} \mathbb{E}_x \left(  \int_0^t \left\| \Theta_s -\tau_s \right\|^2 \, \left\| d P_{t/2-s/2} f (X_s) \right\|^2 ds\right) d\mu(x)\right)^{1/2}.
\end{align*}
Let us now observe that from~\eqref{tau=M Theta} and because $\Theta_s$ is an isometry, it holds that
\[
\left\| \Theta_s -\tau_s \right\| = \left\| \Theta_s -\mathcal{M}_s \Theta_s  \right\| \le \left\| \mathbf{Id} -\mathcal{M}_s   \right\|.
\]
In view of~\eqref{multiplicative function M_t}, we have then
\begin{align*}
\mathbf{Id} -\mathcal{M}_s= \frac{1}{2}\int_0^s \mathcal{M}_u \, \Theta_u \, \mathfrak{Ric} \, \Theta_u^{-1} du.
\end{align*}
Since $\mathbb M$ is compact, there exists a constant $K \ge 0$ such that, in the sense of bilinear forms, $\mathfrak{Ric} \ge -K$ and also $\| \mathfrak{Ric} \| \le K$. Equation \eqref{multiplicative function M_t} and Gronwall's lemma imply $\| \mathcal{M}_u \| \le e^{Ku/2}$ and thus
\begin{align*}
\| \mathbf{Id} -\mathcal{M}_s\| & \le  \frac{1}{2}\int_0^s \| \mathcal{M}_u \, \Theta_u \, \mathfrak{Ric} \, \Theta_u^{-1} \| du \\
& \le  \frac{K}{2}\int_0^s \| \mathcal{M}_u  \| du \\
& \le  \frac{K}{2}\int_0^s e^{Ku/2}  du= e^{Ks/2}-1.
\end{align*}
Therefore, for $s \le t$,
\[
\left\| \Theta_s -\tau_s \right\| \le e^{Ks/2}-1 \le e^{Kt/2}-1
\]
and we conclude that
\begin{align*}
 &  \left| \int_{\mathbb M} \mathbb{E}_x \left( \left| \int_0^t \langle \Theta_s d P_{t/2-s/2} f (X_s) ,   dB_s \rangle \right| - \left| \int_0^t \langle \tau_s d P_{t/2-s/2} f (X_s) ,   dB_s \rangle \right| \right) d\mu(x)  \right|  \\
  \le & (e^{Kt/2}-1)  \left( \int_{\mathbb M} \mathbb{E}_x \left(  \int_0^t  \left\| d P_{t/2-s/2} f (X_s) \right\|^2 ds\right) d\mu(x)\right)^{1/2} \\
  \le & (e^{Kt/2}-1)  \left( \int_{\mathbb M}  \int_0^t  P_{s/2} (\left\| d P_{t/2-s/2} f  \right\|^2)(x) ds d\mu(x)\right)^{1/2} .
 \end{align*}
Notice now that
\[
\frac{d}{ds} P_{s/2} ((P_{t/2-s/2} f)^2) =P_{s/2} (\left\| d P_{t/2-s/2} f  \right\|^2),
\]
which gives
\[
\int_0^t  P_{s/2} (\left\| d P_{t/2-s/2} f  \right\|^2) ds =P_{t/2}(f^2)- (P_{t/2} f)^2\le P_{t/2}(f^2).
\]
Hence,
\[
\left( \int_{\mathbb M}  \int_0^t  P_{s/2} (\left\| d P_{t/2-s/2} f  \right\|^2)(x) ds d\mu(x)\right)^{1/2} \le \| f \|_{L^2(\mathbb{M}, \mu)}
\]
and finally
\begin{multline*}
\left| \int_{\mathbb M} \mathbb{E}_x \left( \left| \int_0^t \langle \Theta_s d P_{t/2-s/2} f (X_s) ,   dB_s \rangle \right| - \left| \int_0^t \langle \tau_s d P_{t/2-s/2} f (X_s) ,   dB_s \rangle \right| \right) d\mu(x)  \right|  \\
\le  (e^{Kt/2}-1)\| f \|_{L^2(\mathbb{M}, \mu)}.
\end{multline*}
It is then clear that
\begin{align*}
  \lim_{t \to 0} \frac{1}{\sqrt{t}} \left| \int_{\mathbb M} \mathbb{E}_x \left( \left| \int_0^t \langle \Theta_s d P_{t/2-s/2} f (X_s) ,   dB_s \rangle \right| - \left| \int_0^t \langle \tau_s d P_{t/2-s/2} f (X_s) ,   dB_s \rangle \right| \right) d\mu(x)  \right| =0.
 \end{align*}
\end{proof}

\begin{lemma}
For any $f \in W^{1,2}(\mathbb M)$,
\begin{align*}
 \lim_{t \to 0} \frac{1}{\sqrt{t}} \left| \int_{\mathbb M} \mathbb{E}_x \left( \left| \int_0^t \langle \tau_s d P_{t/2-s/2} f (X_s) ,   dB_s \rangle \right| - \left| \int_0^t \langle  d P_{t/2} f (X_0) ,   dB_s \rangle \right| \right) d\mu(x)  \right| =0.
 \end{align*}
\end{lemma}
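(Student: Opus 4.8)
The plan is to follow the same three-step scheme as in the two preceding lemmas. Write $M_s := \tau_s\, dP_{(t-s)/2}f(X_s)$ and $M_0 := dP_{t/2}f(X_0)$, both taking values in $T^*_{X_0}\mathbb M$. The reverse triangle inequality $\big||a|-|b|\big|\le|a-b|$ together with $|\mathbb E_x(\cdot)|\le\mathbb E_x|\cdot|$ reduce the quantity to be estimated to
\[
\frac{1}{\sqrt t}\int_{\mathbb M}\mathbb E_x\Big|\int_0^t\langle M_s-M_0,dB_s\rangle\Big|\,d\mu(x),
\]
and applying the Cauchy--Schwarz inequality in $x$ (legitimate since $\mu(\mathbb M)=1$) followed by the It\^o isometry bounds this by
\[
\Big(\frac1t\int_{\mathbb M}\mathbb E_x\int_0^t\|M_s-M_0\|^2\,ds\,d\mu(x)\Big)^{1/2}.
\]
It thus suffices to show that the quantity under the square root tends to $0$ as $t\to0^+$.

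The structural fact I would exploit is that $(M_s)_{0\le s<t}$ is a martingale. This is the one-form analogue of the martingale property established for $P_{(t-s)/2}f(X_s)$ in the probabilistic representation lemma above: since $dP_{(t-s)/2}f$ solves the backward heat equation for one-forms, the very computation underlying the Feynman--Kac formula~\eqref{FKQ} shows that the bounded variation part of $\circ d[\tau_s\, dP_{(t-s)/2}f(X_s)]$ vanishes. As $M_0$ is deterministic given $X_0=x$, the martingale property yields the orthogonal decomposition
\[
\mathbb E_x\|M_s-M_0\|^2=\mathbb E_x\|M_s\|^2-\|M_0\|^2 .
\]

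It then remains to control $\Phi(s):=\int_{\mathbb M}\mathbb E_x\|M_s\|^2\,d\mu(x)$. By~\eqref{tau=M Theta}, \eqref{multiplicative function M_t} and the Gronwall estimate $\|\mathcal M_s\|\le e^{Ks/2}$ obtained in the proof of Lemma~\ref{lem4}, the operator norm satisfies $\|\tau_s\|=\|\mathcal M_s\Theta_s\|\le e^{Ks/2}$. Combining this with the invariance of $\mu$ under the semigroup, $\int_{\mathbb M}\mathbb E_x(h(X_s))\,d\mu(x)=\int_{\mathbb M}h\,d\mu$, gives
\[
\Phi(s)\le e^{Ks}\int_{\mathbb M}\mathbb E_x\big(\|dP_{(t-s)/2}f\|^2(X_s)\big)\,d\mu(x)=e^{Ks}\,\mathcal E\big(P_{(t-s)/2}f,P_{(t-s)/2}f\big)\le e^{Kt}\,\mathcal E(f,f),
\]
the last inequality using that $r\mapsto\mathcal E(P_rf,P_rf)$ is nonincreasing. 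On the other hand $\Phi(0)=\mathcal E(P_{t/2}f,P_{t/2}f)$, which by the spectral calculus of Lemma~\ref{lem1} converges to $\mathcal E(f,f)$ as $t\to0^+$. Since $\mathbb E_x\|M_s-M_0\|^2\ge0$ forces $\Phi(s)-\Phi(0)\ge0$, I conclude
\[
0\le\frac1t\int_0^t\big(\Phi(s)-\Phi(0)\big)\,ds\le\sup_{0\le s\le t}\big(\Phi(s)-\Phi(0)\big)\le e^{Kt}\mathcal E(f,f)-\mathcal E(P_{t/2}f,P_{t/2}f)\xrightarrow[t\to0^+]{}0,
\]
which is exactly the estimate needed above.

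The main obstacle I anticipate is justifying the martingale identity when $f$ is merely in $W^{1,2}(\mathbb M)$. For every $s<t$ the form $dP_{(t-s)/2}f$ is smooth and bounded, so $M_s$ is square-integrable and the martingale computation is entirely licit; the delicate point is that $df$ is only $L^2$, so one must verify square-integrability uniformly and pass to the limit $s\to t^-$ with care. The uniform bound $\Phi(s)\le e^{Kt}\mathcal E(f,f)$ supplies precisely the integrability required to make these limiting arguments rigorous, so the decomposition holds on the open interval $(0,t)$ and integrates to the stated conclusion.
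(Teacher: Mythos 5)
Your argument is correct, and it reaches the same estimate as the paper by a cleaner route. Both proofs begin identically: reverse triangle inequality, Cauchy--Schwarz, and the It\^o isometry reduce the problem to showing that $\frac{1}{t}\int_{\mathbb M}\mathbb E_x\int_0^t\|\tau_s dP_{(t-s)/2}f(X_s)-dP_{t/2}f(x)\|^2\,ds\,d\mu(x)\to 0$. At that point the paper expands the square and computes the cross term by hand: it applies \eqref{FKQ} to write $\mathbb E_x(\tau_s dP_{(t-s)/2}f(X_s))=Q_{s/2}dP_{(t-s)/2}f(x)$, then moves to the codifferential via $\delta Q_{s/2}=P_{s/2}\delta$ and $\Delta=-\delta d$ to identify the cross term with $-t\int_{\mathbb M}\Delta P_{t/2}f\,P_{t/2}f\,d\mu$, and finally controls everything with the spectral bound on $\frac{e^{Kt}(1-e^{-t\lambda})-t\lambda e^{-t\lambda}}{t\lambda}$. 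You instead observe that $M_s=\tau_s dP_{(t-s)/2}f(X_s)$ is a martingale, so the cross term is absorbed into the Pythagoras identity $\mathbb E_x\|M_s-M_0\|^2=\mathbb E_x\|M_s\|^2-\|M_0\|^2$, and you conclude from the two-sided squeeze $0\le\Phi(s)-\Phi(0)\le e^{Kt}\mathcal E(f,f)-\mathcal E(P_{t/2}f,P_{t/2}f)$. The two computations are at bottom the same fact --- the martingale property is precisely what makes the paper's cross term equal to $t\int_{\mathbb M}\|dP_{t/2}f\|^2d\mu$ --- but your packaging avoids the codifferential manipulation and makes the positivity $\Phi(s)\ge\Phi(0)$ visible, which the paper's version obscures. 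The one step you should spell out is the martingale property itself: it follows from \eqref{FKQ} combined with the Markov property, the multiplicative (cocycle) structure of $\tau$, and the commutation $dP_u=Q_u d$ (so that $dP_{(t-s)/2}f=Q_{(t-s)/2}df$); your appeal to the vanishing of the bounded-variation part of $\circ d[\tau_s dP_{(t-s)/2}f(X_s)]$ is the right idea but is stated rather than verified. As you note, for $s<t$ the form $dP_{(t-s)/2}f$ is smooth, so the computation is licit on $[0,t)$, and your uniform bound $\Phi(s)\le e^{Kt}\mathcal E(f,f)$ together with Lemma~\ref{lem4}'s estimate $\|\mathcal M_s\|\le e^{Ks/2}$ and the isometry of $\Theta_s$ from \eqref{tau=M Theta} justifies the passage to the closed interval. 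No gap remains.
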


\begin{proof}
Firstly, the integral above can be bounded as
\begin{align*}
 & \left| \int_{\mathbb M} \mathbb{E}_x \left( \left| \int_0^t \langle \tau_s d P_{t/2-s/2} f (X_s) ,   dB_s \rangle \right| - \left| \int_0^t \langle  d P_{t/2} f (X_0) ,   dB_s \rangle \right| \right) d\mu(x)  \right| \\
 \le &\int_{\mathbb M} \mathbb{E}_x \left( \left| \int_0^t \langle \tau_s d P_{t/2-s/2} f (X_s) ,   dB_s \rangle  -  \int_0^t \langle  d P_{t/2} f (X_0) ,   dB_s \rangle \right| \right) d\mu(x) \\
  \le &\left(\int_{\mathbb M} \mathbb{E}_x \left( \left| \int_0^t \langle \tau_s d P_{t/2-s/2} f (X_s) ,   dB_s \rangle  -  \int_0^t \langle  d P_{t/2} f (X_0) ,   dB_s \rangle \right|^2 \right) d\mu(x)\right)^{1/2} \\
  =& \left(\int_{\mathbb M} \mathbb{E}_x \left(  \int_0^t \left\| \tau_s d P_{t/2-s/2} f (X_s)   -  d P_{t/2} f (X_0)  \right\|^2 ds \right) d\mu(x)\right)^{1/2} \\
   =& \left(\int_{\mathbb M} \mathbb{E}_x \left(  \int_0^t \left\| \tau_s d P_{t/2-s/2} f (X_s) \right\|^2- 2 \left\langle \tau_s d P_{t/2-s/2} f (X_s) ,d P_{t/2} f (X_0) \right\rangle   + \left\|  d P_{t/2} f (X_0)  \right\|^2 ds \right) d\mu(x)\right)^{1/2}.
\end{align*}
Secondly, we analyze all terms in the latter integral expression separately. For the first one, using the notations and computations in the proof of Lemma~\ref{lem4} we obtain
\begin{align*}
\int_{\mathbb M} \mathbb{E}_x \left(  \int_0^t \left\| \tau_s d P_{t/2-s/2} f (X_s) \right\|^2 ds \right) d\mu(x)
 & \le e^{Kt}\int_{\mathbb M} \mathbb{E}_x \left(  \int_0^t \left\|  d P_{t/2-s/2} f (X_s) \right\|^2 ds \right) d\mu(x) \\
 & \le e^{Kt}\int_{\mathbb M}( P_{t/2}(f^2)(x)- (P_{t/2} f)^2(x)) d\mu(x).
 \end{align*}
 Then, using \eqref{FKQ}, we have
 \begin{align*}
 & \int_{\mathbb M} \mathbb{E}_x \left( \int_0^t \left\langle \tau_s d P_{t/2-s/2} f (X_s) ,d P_{t/2} f (X_0) \right\rangle ds   \right) d\mu(x) \\
=& \int_{\mathbb M}   \int_0^t \left\langle  \mathbb{E}_x (\tau_s d P_{t/2-s/2} f (X_s)) ,d P_{t/2} f (x) \right\rangle ds \,   d\mu(x)  \\
=& \int_{\mathbb M}   \int_0^t \left\langle  Q_{s/2} d P_{t/2-s/2} f (x) ,d P_{t/2} f (x) \right\rangle ds \,   d\mu(x) \\
=& \int_0^t \int_{\mathbb M}    \delta  Q_{s/2} d P_{t/2-s/2} f (x) P_{t/2} f (x)  d\mu(x) \, ds,
\end{align*}
where $\delta$ is the adjoint in $L^2$ of the exterior derivative $d$. Since $\delta  Q_{s/2}= P_{s/2} \delta$ and $\Delta = -\delta d$, we deduce then
\begin{align*}
& \int_{\mathbb M} \mathbb{E}_x \left( \int_0^t \left\langle \tau_s d P_{t/2-s/2} f (X_s) ,d P_{t/2} f (X_0) \right\rangle ds   \right) d\mu(x) \\
&=- \int_0^t \int_{\mathbb M}   P_{s/2} \Delta    P_{t/2-s/2} f (x) P_{t/2} f (x)  d\mu(x) \, ds \\
&=- t \int_{\mathbb M}    \Delta    P_{t/2} f (x) P_{t/2} f (x)  d\mu(x).
 \end{align*}
Finally, for the last term we get
\begin{align*}
  \int_{\mathbb M} \mathbb{E}_x \left(  \int_0^t \left\|  d P_{t/2} f (X_0)  \right\|^2 ds\right) d\mu(x) 
& =t  \int_{\mathbb M} \left\|  d P_{t/2} f (x)  \right\|^2 d\mu(x)\\
& =- t \int_{\mathbb M}    \Delta    P_{t/2} f (x) P_{t/2} f (x)  d\mu(x)
\end{align*}
and conclude
\begin{multline*}
\left| \int_{\mathbb M} \mathbb{E}_x \left( \left| \int_0^t \langle \tau_s d P_{t/2-s/2} f (X_s) ,   dB_s \rangle \right| - \left| \int_0^t \langle  d P_{t/2} f (X_0) ,   dB_s \rangle \right| \right) d\mu(x)  \right|^2 \\
\le   e^{Kt}\int_{\mathbb M}( P_{t/2}(f^2)(x)- (P_{t/2} f)^2(x)) d\mu(x)+t \int_{\mathbb M}    \Delta    P_{t/2} f (x) P_{t/2} f (x)  d\mu(x).
\end{multline*}
To estimate the last term, we apply the spectral theorem as in the proof of Lemma~\ref{lem1}. With the notations of Lemma \ref{lem1},
\begin{multline*}
e^{Kt}\int_{\mathbb M}( P_{t/2}(f^2)(x)- (P_{t/2} f)^2(x)) d\mu(x)+t \int_{\mathbb M}    \Delta    P_{t/2} f (x) P_{t/2} f (x)  d\mu(x) \\
 \le \int_{\hat{\mathbb M}}(e^{Kt}(1- e^{-t\lambda(m)}) -t \lambda(m) e^{-t\lambda(m)}) \hat{f}(m)^2  d\hat{\mu}(m)
\end{multline*}
and therefore
\begin{multline*}
\frac{1}{t} \left| \int_{\mathbb M} \mathbb{E}_x \left( \left| \int_0^t \langle \tau_s d P_{t/2-s/2} f (X_s) ,   dB_s \rangle \right| - \left| \int_0^t \langle  d P_{t/2} f (X_0) ,   dB_s \rangle \right| \right) d\mu(x)  \right|^2  \\
 \le \int_{\hat{\mathbb M}}\frac{e^{Kt}(1- e^{-t\lambda(m)}) -t \lambda(m) e^{-t\lambda(m)}}{t \lambda(m)} \lambda(m) \hat{f}(m)^2  d\hat{\mu}(m).
\end{multline*}
 Since $f \in W^{1,2}(\mathbb{M})$, we have
\[
\int_{\hat{\mathbb M}} \lambda(m) \hat{f}(m)^2  d\hat{\mu}(m) <+ \infty
 \]
and moreover, for every $t \in (0,1]$,
\[
 \sup_{m \in \hat{\mathbb M}}\frac{e^{Kt}(1- e^{-t\lambda(m)}) -t \lambda(m) e^{-t\lambda(m)}}{t \lambda(m)} \le \sup_{\lambda >0}\frac{e^{K}(1- e^{-\lambda}) - \lambda e^{-\lambda }}{ \lambda} <+\infty.
 \]
Thus, by virtue of the dominated convergence theorem, we conclude that
 \[
\lim_{t \to 0} \frac{1}{t} \left| \int_{\mathbb M} \mathbb{E}_x \left( \left| \int_0^t \langle \tau_s d P_{t/2-s/2} f (X_s) ,   dB_s \rangle \right| - \left| \int_0^t \langle  d P_{t/2} f (X_0) ,  dB_s \rangle \right| \right) d\mu(x)  \right|^2=0.
 \]
 \end{proof}
 
We are finally ready to prove the following version of the main theorem.
 
\begin{theorem}\label{main_sectionproba}
For any $f \in W^{1,2}( {\mathbb M})$,
\[
\lim_{t \to 0} \frac{1}{\sqrt{t}}  \int_{\mathbb M} P_t (|f-f(x)|)(x) d\mu(x) =  \frac{ 2 }{\sqrt
{\pi}} \| Df \| (\mathbb M).
\]
\end{theorem}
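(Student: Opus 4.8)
The plan is to assemble the four preceding results into a telescoping chain of approximations and then evaluate a single explicit Gaussian expectation. First, by the Corollary following Lemma~\ref{lem1}, it is enough to determine the limit of $t^{-1/2}\int_{\mathbb M}P_t(|f-P_tf(x)|)(x)\,d\mu(x)$. I would apply the probabilistic representation lemma with the parameter $2t$ in place of $t$, which rewrites this quantity exactly as
\[
\int_{\mathbb M}\mathbb{E}_x\left(\left|\int_0^{2t}\langle\Theta_s\,dP_{t-s/2}f(X_s),dB_s\rangle\right|\right)d\mu(x).
\]

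Next I would invoke Lemma~\ref{lem4} and the lemma immediately preceding this theorem, both read with $2t$ in place of $t$, to successively replace $\Theta_s$ by $\tau_s$ and then $\tau_s\,dP_{t-s/2}f(X_s)$ by the frozen coefficient $dP_tf(X_0)$. In each step the triangle inequality bounds the difference of the two absolute values by the absolute value of the difference, and the two lemmas assert precisely that this difference is $o(\sqrt t)$; note that replacing $t$ by $2t$ only alters the normalizing factor by a harmless constant $\sqrt2$, so the vanishing after division by $\sqrt t$ is preserved. Consequently the limit sought coincides with
\[
\lim_{t\to0}\frac{1}{\sqrt t}\int_{\mathbb M}\mathbb{E}_x\left(\left|\int_0^{2t}\langle dP_tf(X_0),dB_s\rangle\right|\right)d\mu(x).
\]

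For the final computation I would freeze $X_0=x$, so that $dP_tf(X_0)=dP_tf(x)$ is deterministic and $\int_0^{2t}\langle dP_tf(x),dB_s\rangle=\langle dP_tf(x),B_{2t}\rangle$ is, under $\mathbb{P}_x$, a centered Gaussian of variance $2t\,\|dP_tf(x)\|^2$. Exactly as in~\eqref{E:Gaussian_exp}, this gives $\mathbb{E}_x\big(|\langle dP_tf(x),B_{2t}\rangle|\big)=\sqrt{2t}\,\|dP_tf(x)\|\,\mathbb{E}(|N|)=\tfrac{2\sqrt t}{\sqrt\pi}\|dP_tf(x)\|$, and therefore
\[
\frac{1}{\sqrt t}\int_{\mathbb M}\mathbb{E}_x\left(\left|\int_0^{2t}\langle dP_tf(X_0),dB_s\rangle\right|\right)d\mu(x)=\frac{2}{\sqrt\pi}\int_{\mathbb M}\|dP_tf(x)\|\,d\mu(x).
\]

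It then remains to pass to the limit $t\to0$ in $\int_{\mathbb M}\|dP_tf(x)\|\,d\mu(x)$. Since $f\in W^{1,2}(\mathbb M)$, the spectral argument of Lemma~\ref{lem1} also yields $\mathcal{E}(P_tf-f,P_tf-f)\to0$, so $dP_tf\to df$ in $L^2$ and hence in $L^1$; because $\big|\,\|dP_tf\|-\|df\|\,\big|\le\|d(P_tf-f)\|$ and $\|df\|=\|\nabla f\|$, this forces $\int_{\mathbb M}\|dP_tf(x)\|\,d\mu(x)\to\int_{\mathbb M}\|\nabla f(x)\|\,d\mu(x)=\|Df\|(\mathbb M)$, the last equality holding because for Sobolev functions the variation equals the $L^1$ norm of the gradient (alternatively, this convergence is the heat-semigroup characterization~\eqref{E:BV_sg_char}). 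Combining the displays gives the asserted value $\tfrac{2}{\sqrt\pi}\|Df\|(\mathbb M)$. I expect the main obstacle to be the careful bookkeeping of the three successive approximations under the correct $2t$ scaling, making sure each error term is genuinely $o(\sqrt t)$; by contrast, the concluding identification of the limit with the total variation is comparatively routine.
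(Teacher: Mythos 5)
Your proposal is correct and follows essentially the same route as the paper: chain the Corollary, the representation lemma, Lemma~\ref{lem4} and the subsequent lemma (with the $t\mapsto 2t$ rescaling made explicit, which the paper glosses over), evaluate the Gaussian expectation, and identify $\lim_{t\to0}\int_{\mathbb M}\|dP_tf\|\,d\mu$ with $\|Df\|(\mathbb M)$. Your spectral justification of that last limit is a welcome bit of extra detail the paper omits, but it does not change the argument.
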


\begin{proof}
Combining the previous lemmas, we get that for $f \in W^{1,2}(\mathbb M)$
\[
 \lim_{t \to 0} \frac{1}{\sqrt{t}} \left| \int_{\mathbb M} P_{t} (|f-f(x)|)(x) d\mu(x) - \int_{\mathbb M} \mathbb{E}_x \left( \left| \int_0^{2t} \langle  d P_{t} f (X_0) ,   dB_s \rangle \right| \right)d\mu(x) \right|=0.
\] 
However, we have
\[
\int_{\mathbb M} \mathbb{E}_x \left( \left| \int_0^{2t} \langle  d P_{t} f (X_0) ,   dB_s \rangle \right| \right) d\mu(x)= \int_{\mathbb M} \mathbb{E}_x \left(| \langle  d P_{t} f (x), B_{2t} \rangle | \right) d\mu(x).
\]
We now note that, under $\mathbb{P}_x$, the random variable $\langle  d P_{t} f (x), B_{2t} \rangle$ is a Gaussian random variable with mean zero and variance $2t \| d P_t f (x) \|^2$, hence
\[
\mathbb{E}_x \left(| \langle  d P_{t} f (x), B_{2t} \rangle | \right)=\sqrt{2t} \| d P_t f (x) \|\mathbb{E}_x \left(|N |\right),
\]
where $N$ is a Gaussian random variable with mean zero and variance 1. Therefore, we deduce that
\[
\int_{\mathbb M} \mathbb{E}_x \left( \left| \int_0^{2t} \langle  d P_{t} f (X_0) ,  \circ dB_s \rangle \right| \right) d\mu(x)=2 \sqrt{\frac{t}{\pi}} \| d P_t f (x) \| 
\]
and since
\[
\lim_{t \to 0} \int_{\mathbb{M}} \| d P_t f (x) \| d\mu(x)=\| Df \| (\mathbb M)
\]
we finally conclude
\[
\lim_{t \to 0} \frac{1}{\sqrt{t}}  \int_{\mathbb M} P_t (|f-f(x)|)(x) d\mu(x) =  \frac{ 2 }{\sqrt{\pi}} \| Df \| (\mathbb M).
\]
\end{proof}

\section{Analytic approach}\label{S:Analytic_approach}

The probabilistic approach in the previous section only allowed us to obtain Theorem~\ref{main} for functions $f\in W^{1,2}(\mathbb{M})$. In this section we prove the result in its full generality, i.e. for any $f\in BV$, by means of purely analytic methods that are completely independent of those in Section~\ref{proba approach}. 
Another advantage of this approach is that it can be extended to any $p>1$ in a straightforward manner, as will be illustrated in Section~\ref{S:main_Sobolev}. On the down side, the specific setting of Riemannian manifolds plays a crucial role; treating more general underlying spaces will need different tools. 

\medskip

The key idea to prove~\eqref{main:lim} relies on the tight relationship between the heat kernel and a Gaussian kernel on $\mathbb{M}$ in short times. 
\begin{proposition}\label{P:BV2_fullGausian}
For any $f \in BV(\mathbb{M})$,
\begin{equation}\label{E:BV2_fullGaussian}
\limsup_{t \to 0^+} \frac{1}{\sqrt{t}}\int_{\mathbb{M}}\int_{\mathbb{M}}|f(x)-f(y)|  \frac{1}{(4\pi t)^{n/2}} e^{-\frac{d(x,y)^2}{4t}} \,d\mu(y)\,d\mu(x) <+\infty.
\end{equation}
\end{proposition}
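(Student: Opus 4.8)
The plan is to transfer the already-available heat-kernel bound \eqref{E:BV2_Gaussian} to the Gaussian kernel, exploiting two facts: the two kernels are comparable \emph{near the diagonal} through the parametrix expansion \eqref{E:HK_expansion1}, and the off-diagonal contribution of the Gaussian kernel is negligible for small $t$. Since $f\in BV(\mathbb{M})$, the lower bound in \eqref{E:BV2_limsup} (recalling that $\int_{\mathbb M}P_t(|f-f(x)|)(x)\,d\mu(x)=\int_{\mathbb M}\int_{\mathbb M}|f(x)-f(y)|p_t(x,y)\,d\mu(y)\,d\mu(x)$) guarantees the finiteness of
\[
A:=\limsup_{t\to 0^+}\frac{1}{\sqrt t}\int_{\mathbb M}\int_{\mathbb M}|f(x)-f(y)|\,p_t(x,y)\,d\mu(y)\,d\mu(x)<+\infty.
\]

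First I would fix a near-diagonal scale. By \eqref{E:HK_expansion1}, on $\{d(x,y)\le\kappa\}$ one has $p_t(x,y)=(4\pi t)^{-n/2}e^{-d(x,y)^2/4t}\big(u_0(x,y)+tR(t,x,y)\big)$ with $u_0$ continuous, $u_0(x,x)=1$, and $R$ uniformly bounded, say $|R|\le M$. Since $u_0$ is continuous on the compact set $\mathbb M\times\mathbb M$ and equals $1$ on the diagonal, there is $\kappa'\in(0,\kappa]$ with $u_0(x,y)\ge \tfrac12$ whenever $d(x,y)\le\kappa'$; choosing $t_0>0$ with $t_0M\le\tfrac14$, the denominator then satisfies $u_0+tR\ge\tfrac14$ on $\{d(x,y)\le\kappa'\}$ for all $t\le t_0$. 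Solving \eqref{E:HK_expansion1} for the Gaussian factor gives, on this region and for $t\le t_0$,
\[
\frac{1}{(4\pi t)^{n/2}}e^{-d(x,y)^2/4t}=\frac{p_t(x,y)}{u_0(x,y)+tR(t,x,y)}\le 4\,p_t(x,y).
\]
Integrating $|f(x)-f(y)|$ times both sides over $\{d(x,y)\le\kappa'\}$, and using positivity to enlarge the domain back to all of $\mathbb M\times\mathbb M$ on the right, the near-diagonal part of the Gaussian double integral is at most $4$ times the heat-kernel double integral; hence its $\limsup$ after dividing by $\sqrt t$ is bounded by $4A<+\infty$.

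It then remains to handle the far region $\{d(x,y)>\kappa'\}$, where I would simply use compactness. Bounding $|f(x)-f(y)|\le|f(x)|+|f(y)|$ and $e^{-d(x,y)^2/4t}\le e^{-(\kappa')^2/4t}$, and recalling $\mu(\mathbb M)=1$ and $f\in L^1(\mathbb M,\mu)$, the far contribution is at most
\[
\frac{2\,\|f\|_{L^1(\mathbb M,\mu)}}{\sqrt t\,(4\pi t)^{n/2}}\,e^{-(\kappa')^2/4t},
\]
which tends to $0$ as $t\to 0^+$, since the Gaussian decay beats the polynomial factor $t^{-(n+1)/2}$. Adding the near and far contributions yields the finite $\limsup$ asserted in \eqref{E:BV2_fullGaussian}.

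The main obstacle is precisely the control of the parametrix denominator $u_0+tR$ near the diagonal: one must shrink $\kappa$ to a smaller radius $\kappa'$ using only the continuity of $u_0$ together with $u_0|_{\mathrm{diag}}=1$, and \emph{simultaneously} restrict $t$ so that the remainder term $tR$ cannot drive $u_0+tR$ toward zero. Once this uniform lower bound on the denominator is secured, the pointwise comparison with $p_t$ is immediate and the off-diagonal estimate is entirely routine.
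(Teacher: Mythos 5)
Your argument is correct, but it runs along a different track than the paper's. The paper's proof is essentially one line: it invokes the \emph{global} lower heat kernel bound in \eqref{E:Hsu_HK_estimate1}, namely $C_1 t^{-n/2}e^{-d(x,y)^2/4t}\le p_t(x,y)$ for all $x,y\in\mathbb{M}$ and $t\in(0,1)$ (a bound that, as the paper notes, holds even across the cut locus), so that the Gaussian kernel is dominated pointwise everywhere by $(4\pi)^{-n/2}C_1^{-1}\,p_t(x,y)$, and \eqref{E:BV2_fullGaussian} follows at once from \eqref{E:BV2_Gaussian}. You instead derive the comparison only near the diagonal from the parametrix expansion \eqref{E:HK_expansion1}, which forces you to (i) secure a uniform lower bound on the denominator $u_0+tR$ by shrinking $\kappa$ to $\kappa'$ and restricting $t\le t_0$ --- this step is carried out correctly, using continuity of $u_0$ on the compact set $\{d(x,y)\le\kappa\}$ together with $u_0|_{\mathrm{diag}}=1$ and the uniform bound on $R$ --- and (ii) dispose of the region $\{d(x,y)>\kappa'\}$ by the crude estimate $e^{-d(x,y)^2/4t}\le e^{-(\kappa')^2/4t}$, which beats the factor $t^{-(n+1)/2}$. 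Both routes are valid. The paper's buys brevity at the cost of citing Hsu's nontrivial global two-sided bound; yours uses only the local asymptotic expansion plus an elementary tail estimate, and in fact anticipates the near/far splitting that the paper deploys anyway in Lemma~\ref{L:Step1} and Theorem~\ref{T:tildeHK_to_HK_ve}, so nothing beyond ingredients already needed later is consumed. The only cosmetic caveat is that your comparison constant ($4$ instead of $(4\pi)^{-n/2}C_1^{-1}$) and the restriction to $t\le t_0$ are harmless for a $\limsup$ statement, but should be stated as holding for $t$ small rather than for all $t\in(0,1)$.
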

\begin{proof}
By virtue of the global bound for the heat kernel~\eqref{E:Hsu_HK_estimate1}, there is a constant $C_1>0$ such that
\begin{equation}\label{E:Hsu_HK_estimate}
\frac{C_1}{t^{n/2}}e^{-\frac{d(x,y)^2}{4t}}\leq p_t(x,y)
\end{equation}
for all $t\in (0,1)$ and $x,y\in\mathbb{M}$, hence~\eqref{E:BV2_fullGaussian} follows from~\eqref{E:BV2_Gaussian}.
\end{proof}

\subsection{Approximating the heat kernel by a Gaussian kernel}
The Gaussian kernel in Proposition~\ref{P:BV2_fullGausian} also appears in asymptotic expansion~\eqref{E:HK_expansion1} given by
\begin{equation}\label{E:HK_expansion}
p_t(x,y)= \frac{1}{(4\pi t)^{n/2}} e^{-\frac{d(x,y)^2}{4t}}\big(u_0(x,y) +t R(t,x,y) \big)
\end{equation}
for any $t\in (0,1)$ and $(x,y)\in\{ (x,y) \in \mathbb{M}\times \mathbb{M}, d(x,y) \le \kappa\}$, where $\kappa>0$, $u_0$ is continuous with $u_0(x,x)=1$ and $R(t,x,y)$ is uniformly bounded on $(0,1) \times \mathbb{M}\times \mathbb{M}$. This expansion will allow us to estimate 
\[
\lim_{t \to 0^+} \frac{1}{\sqrt{t}}  \int_{\mathbb{M}} \int_{\mathbb{M}}|f(x)-f(y)|p_t(x,y)\,d\mu(y)\,d\mu(x)
\]
by replacing the heat kernel $p_t(x,y)$ by the simpler Gaussian kernel 
\begin{equation*}
\tilde{p}_t(x,y):=\frac{1}{(4\pi t)^{n/2}}e^{-\frac{d(x,y)^2}{4t}}, \qquad (t,x,y)\in (0,1) \times \mathbb{M}\times \mathbb{M}.
\end{equation*}
The key observation formulated in the next theorem makes possible to make that replacement. Besides of allowing us to work afterwards with $\tilde{p}_t(x,y)$ instead of $p_t(x,y)$ and take advantage of its explicit expression, it also reduces the analysis to integrals in small balls. Recall that the expansion~\eqref{E:HK_expansion} holds for $x,y\in\mathbb{M}$ with $d(x,y)\leq \kappa>0$.
\begin{theorem}\label{T:tildeHK_to_HK_ve}
For any $0<\varepsilon<\kappa $ and $f \in  BV(\mathbb{M})$,
\begin{equation*}
\lim_{t \to 0^+} \frac{1}{\sqrt{t}}\int_{\mathbb{M}}\int_{\mathbb{M}}|f(x)-f(y)|\,| p_t(x,y) - 1_{B(x,\varepsilon)}(y) \tilde{p}_t(x,y) \, |\,d\mu(y)\,d\mu(x)=0.
\end{equation*}
\end{theorem}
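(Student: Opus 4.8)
The plan is to split the double integral according to whether $y$ lies near $x$ or far from $x$, decomposing $\mathbb{M}\times\mathbb{M}$ into the near-diagonal region $\{d(x,y)\le\varepsilon\}$, where the indicator $1_{B(x,\varepsilon)}(y)$ equals $1$ and the expansion~\eqref{E:HK_expansion} is available, and the off-diagonal region $\{d(x,y)>\varepsilon\}$, where the indicator vanishes so that the integrand reduces to $p_t(x,y)$. Throughout I would use that $\mu(\mathbb{M})=1$ gives the crude bound $\int_{\mathbb{M}}\int_{\mathbb{M}}|f(x)-f(y)|\,d\mu(y)\,d\mu(x)\le 2\|f\|_{L^1(\mathbb{M},\mu)}$, and that by Proposition~\ref{P:BV2_fullGausian} there exist $M>0$ and $t_0>0$ with
\[
\frac{1}{\sqrt{t}}\int_{\mathbb{M}}\int_{\mathbb{M}}|f(x)-f(y)|\,\tilde{p}_t(x,y)\,d\mu(y)\,d\mu(x)\le M,\qquad 0<t<t_0.
\]

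On the off-diagonal region the estimate is immediate: for $d(x,y)>\varepsilon$ the Gaussian upper bound in~\eqref{E:Hsu_HK_estimate1} gives $p_t(x,y)\le C_2\,t^{-(2n-1)/2}e^{-\varepsilon^2/(4t)}$, so after dividing by $\sqrt{t}$ the corresponding contribution is at most $2\|f\|_{L^1(\mathbb{M},\mu)}C_2\,t^{-n}e^{-\varepsilon^2/(4t)}$, which tends to $0$ as $t\to0^+$ because the Gaussian factor $e^{-\varepsilon^2/(4t)}$ dominates every negative power of $t$.

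On the near-diagonal region the indicator is $1$ and the expansion~\eqref{E:HK_expansion} yields
\[
p_t(x,y)-\tilde{p}_t(x,y)=\tilde{p}_t(x,y)\big(u_0(x,y)-1+tR(t,x,y)\big),\qquad d(x,y)\le\varepsilon.
\]
The term carrying $tR$ is harmless: since $R$ is uniformly bounded, its contribution is at most $\|R\|_\infty\,t$ times the quantity controlled by $M$ above, hence $O(t)\to0$. The whole difficulty is concentrated in the term carrying $u_0(x,y)-1$, for which the expansion only provides continuity of $u_0$ together with $u_0(x,x)=1$, and no quantitative modulus.

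To control this last term I would exploit uniform continuity of $u_0$ on the compact set $\mathbb{M}\times\mathbb{M}$: given $\delta>0$, choose $\eta\in(0,\varepsilon]$ so that $|u_0(x,y)-1|<\delta$ whenever $d(x,y)<\eta$. On $\{d(x,y)<\eta\}$ the contribution is then bounded by $\delta$ times the Gaussian quantity, hence by $\delta M$; on the annular region $\{\eta\le d(x,y)\le\varepsilon\}$ one bounds $|u_0-1|$ by $\|u_0-1\|_\infty$ and $\tilde{p}_t(x,y)\le(4\pi t)^{-n/2}e^{-\eta^2/(4t)}$, so that this region contributes at most a constant times $t^{-(n+1)/2}e^{-\eta^2/(4t)}\to0$. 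Letting first $t\to0^+$ and then $\delta\to0^+$ shows the $u_0-1$ term vanishes in the limit, and combining the three estimates gives the claim. The main obstacle is precisely this last step: because only continuity of $u_0$ is assumed, one cannot estimate $|u_0-1|$ pointwise against a power of $d(x,y)$, and must instead trade the smallness of $|u_0-1|$ near the diagonal (through $\delta$) against the uniform Gaussian control furnished by Proposition~\ref{P:BV2_fullGausian}, while pushing the non-negligible values of $|u_0-1|$ into a region where the Gaussian factor alone forces decay.
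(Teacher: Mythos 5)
Your proof is correct and follows essentially the same route as the paper: split into near- and far-diagonal regions, kill the far region with the Gaussian heat kernel upper bound, and on the near region use the expansion~\eqref{E:HK_expansion} together with the uniform continuity of $u_0$, trading the bound $|u_0-1|\le\delta$ near the diagonal against the uniform Gaussian control of Proposition~\ref{P:BV2_fullGausian} before letting $\delta\to 0^+$. Your treatment of the $tR$ term, bounding its contribution by $\|R\|_\infty\, t$ times that same uniform Gaussian bound, is in fact slightly more economical than the paper's Lemma~\ref{L:Step2}, which carries out a separate polar-coordinate computation.
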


The proof is subdivided in several lemmas. The first states that outside of small balls there is no contribution to the limit.

\begin{lemma}\label{L:Step1}
For any $\varepsilon >0$ and $f \in  BV(\mathbb{M})$
\begin{equation}\label{E:Step1a_tildeHK_to_HK_ve}
\lim_{t \to 0^+} \frac{1}{\sqrt{t}}\int_{\mathbb{M}}\int_{\mathbb{M}\setminus B(x,\varepsilon)}|f(x)-f(y)| p_t(x,y) \,d\mu(y)\,d\mu(x)=0
\end{equation}
and
\begin{equation}\label{E:Step1b_tildeHK_to_HK_ve}
\lim_{t \to 0^+} \frac{1}{\sqrt{t}}\int_{\mathbb{M}}\int_{\mathbb{M}\setminus B(x,\varepsilon)}|f(x)-f(y)| \tilde{p}_t(x,y) \,d\mu(y)\,d\mu(x)=0.
\end{equation}
\end{lemma}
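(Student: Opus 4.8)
The plan is to exploit the fact that on the region $d(x,y)\geq\varepsilon$ the Gaussian factor $e^{-d(x,y)^2/(4t)}$ decays faster than any power of $t$ as $t\to 0^+$, and that this super-polynomial decay overwhelms the polynomial prefactors coming from the heat kernel bounds together with the normalization $1/\sqrt{t}$. Since the two assertions \eqref{E:Step1a_tildeHK_to_HK_ve} and \eqref{E:Step1b_tildeHK_to_HK_ve} share the same structure, I would treat them in parallel, the only difference being the form of the kernel.

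First I would invoke the Gaussian upper bound in \eqref{E:Hsu_HK_estimate1}, namely $p_t(x,y)\leq C_2\, t^{-(2n-1)/2}\, e^{-d(x,y)^2/(4t)}$ for $t\in(0,1)$, to reduce \eqref{E:Step1a_tildeHK_to_HK_ve} to an estimate involving only the Gaussian exponential and an explicit power of $t$; the assertion \eqref{E:Step1b_tildeHK_to_HK_ve} for $\tilde p_t$ is handled identically, with the prefactor $(4\pi t)^{-n/2}$ replacing $C_2\, t^{-(2n-1)/2}$. On the integration domain $\mathbb{M}\setminus B(x,\varepsilon)$ one has $d(x,y)\geq\varepsilon$, so I would use the crude pointwise bound $e^{-d(x,y)^2/(4t)}\leq e^{-\varepsilon^2/(4t)}$ and pull this purely $t$-dependent factor out of the double integral.

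The remaining double integral $\int_{\mathbb{M}}\int_{\mathbb{M}}|f(x)-f(y)|\,d\mu(y)\,d\mu(x)$ is then bounded by $2\|f\|_{L^1(\mathbb{M},\mu)}$ via the triangle inequality together with the normalization $\mu(\mathbb{M})=1$; this quantity is finite because $f\in BV(\mathbb{M})\subset L^1(\mathbb{M},\mu)$. Collecting constants, the full expression in \eqref{E:Step1a_tildeHK_to_HK_ve} is dominated by a term of the form $C\, t^{-n}\, e^{-\varepsilon^2/(4t)}$, and the one in \eqref{E:Step1b_tildeHK_to_HK_ve} by $C\, t^{-(n+1)/2}\, e^{-\varepsilon^2/(4t)}$; both tend to $0$ as $t\to 0^+$ since the exponential decay beats the polynomial blow-up.

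The only step requiring attention is the bookkeeping of the exponents: multiplying the normalization $t^{-1/2}$ by the heat-kernel prefactor $t^{-(2n-1)/2}$ yields exactly $t^{-n}$, and one must confirm that $t^{-n}e^{-\varepsilon^2/(4t)}\to 0$, which follows from the elementary limit $s^{n}e^{-cs}\to 0$ as $s\to\infty$ with $s=1/t$ and $c=\varepsilon^2/4>0$. There is no genuine analytic obstacle here; the lemma is simply a quantitative expression of the principle that, away from the diagonal, the heat kernel contributes nothing to the short-time asymptotics.
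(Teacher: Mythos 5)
Your argument is correct, and it rests on the same underlying principle as the paper's proof: off the diagonal the Gaussian factor $e^{-d(x,y)^2/(4t)}\le e^{-\varepsilon^2/(4t)}$ decays super-polynomially and swallows every power of $t^{-1}$. The two proofs differ only in how the residual double integral is controlled. You discard the entire exponential at once and bound $\int_{\mathbb{M}}\int_{\mathbb{M}}|f(x)-f(y)|\,d\mu(y)\,d\mu(x)\le 2\|f\|_{L^1(\mathbb{M},\mu)}$ using the triangle inequality and $\mu(\mathbb{M})=1$; this is perfectly valid here and is the more elementary route. The paper instead splits the exponential in half, writing $e^{-d^2/(4t)}\le e^{-\varepsilon^2/(8t)}e^{-d^2/(8t)}$, and uses the \emph{lower} Gaussian bound in \eqref{E:Hsu_HK_estimate1} to recognize the retained factor $t^{-n/2}e^{-d^2/(8t)}$ as a multiple of $p_{2t}(x,y)$, so that the inner integral is controlled by $\int_{\mathbb{M}}p_{2t}(x,y)\,d\mu(y)\le 1$. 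What the paper's version buys is independence from the finiteness of $\mu$: it would survive on a complete manifold of infinite volume, where your bound $\int\int|f(x)-f(y)|\,d\mu\,d\mu<\infty$ fails, which is consistent with the authors' remark that compactness is likely not essential. In the stated compact, normalized setting your proof is complete; your exponent bookkeeping ($t^{-1/2}\cdot t^{-(2n-1)/2}=t^{-n}$, and $t^{-(n+1)/2}$ for $\tilde p_t$) is accurate.
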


\begin{proof}
Applying the upper bound in~\eqref{E:Hsu_HK_estimate1}, one can bound~\eqref{E:Step1a_tildeHK_to_HK_ve} by
\begin{align*}\label{E:tildeHK_to_HK_ve_01}
& \frac{2}{\sqrt{t}}\int_{\mathbb{M}}|f(x)|\int_{\mathbb{M}\setminus B_{\mathbb{M}}(x,\varepsilon)}\frac{C_2}{t^n}e^{-\frac{d(x,y)^2}{4t}}\,d\mu(y)\,d\mu(x)\notag\\
&\leq \frac{2C_2}{t^{(n+1)/2}}e^{-\frac{\varepsilon}{8t}}\int_{\mathbb{M}}|f(x)|\int_{\mathbb{M}\setminus B_{\mathbb{M}}(x,\varepsilon)}\frac{1}{t^{n/2}}e^{-\frac{d(x,y)^2}{8t}}\,d\mu(y)\,d\mu(x)\notag\\
&\leq \frac{2^{n/2}C_2}{C_1 t^{(n+1)/2}}e^{-\frac{\varepsilon}{8t}}\int_{\mathbb{M}}|f(x)|\int_{\mathbb{M}\setminus B_{\mathbb{M}}(x,\varepsilon)}p_{2t}(x,y)\,d\mu(y)\,d\mu(x)
\leq \frac{2^{n/2}C_2}{C_1 t^{(n+1)/2}}e^{-\frac{\varepsilon}{8t}}\|f\|_{L^1(\mathbb{M},\mu)}
\end{align*}
which vanishes as $t\to 0^+$. For~\eqref{E:Step1b_tildeHK_to_HK_ve} we use the lower bound in~\eqref{E:Hsu_HK_estimate1} to get
\begin{align*}
&\frac{1}{\sqrt{t}}\int_{\mathbb{M}}\int_{\mathbb{M}\setminus B(x,\varepsilon)}|f(x)-f(y)| \tilde{p}_t(x,y) \,d\mu(y)\,d\mu(x)\\
&\leq \frac{2}{\sqrt{t}}\int_{\mathbb{M}}|f(x)|\int_{\mathbb{M}\setminus B(x,\varepsilon)}\frac{1}{(2\pi t)^{n/2}}e^{-\frac{d(x,y)^2}{4t}}d\mu(y)\,d\mu(x)\\
&\leq \frac{2}{\sqrt{t}}e^{-\frac{\varepsilon}{8t}}\int_{\mathbb{M}}|f(x)|\int_{\mathbb{M}\setminus B(x,\varepsilon)}\frac{1}{C_1\pi^{n/2}}p_{2t}(x,y)\,d\mu(y)\,d\mu(x)
\leq \frac{2}{C_1\pi^{n/2} \sqrt{t}}e^{-\frac{\varepsilon}{8t}}\|f\|_{L^1(\mathbb{M},\mu)}
\end{align*}
which again vanishes as $t\to 0^+$. 
\end{proof}
\noindent
The next lemma shows that the second term in the expansion~\eqref{E:HK_expansion} is also negligible as $t\to 0$.
\begin{lemma}\label{L:Step2}
For any $f \in  BV(\mathbb{M})$ and $0<\varepsilon<\kappa$,
\[
\lim_{t \to 0^+} \frac{1}{\sqrt{t}}\int_{\mathbb{M}}\int_{ B(x,\varepsilon)}|f(x)-f(y)|   \frac{1}{(4\pi t)^{n/2}} e^{-\frac{d(x,y)^2}{4t}} t R(t,x,y)  \,d\mu(y)\,d\mu(x)=0.
\]
\end{lemma}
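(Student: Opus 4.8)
The plan is to take advantage of the extra power of $t$ multiplying the remainder term $R(t,x,y)$ in the expansion~\eqref{E:HK_expansion}: this single additional factor of $t$ turns the otherwise borderline $t^{-1/2}$ scaling into a vanishing $t^{1/2}$ scaling, so the second-order term in the short-time expansion contributes negligibly at the scale $t^{-1/2}$.

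First I would invoke the uniform boundedness of the remainder. By the heat-kernel asymptotics recalled in~\eqref{E:HK_expansion}, the function $R(t,x,y)$ is uniformly bounded on $(0,1)\times\mathbb{M}\times\mathbb{M}$, so there exists $M>0$ with $|R(t,x,y)|\le M$ for all such $(t,x,y)$. Bounding the integrand in absolute value and enlarging the domain of the inner integral from $B(x,\varepsilon)$ to all of $\mathbb{M}$ (which only increases the value since the remaining integrand is nonnegative), one obtains for every $t\in(0,1)$ the estimate
\[
\frac{1}{\sqrt{t}}\left|\int_{\mathbb{M}}\int_{B(x,\varepsilon)}|f(x)-f(y)|\,\tilde{p}_t(x,y)\,t R(t,x,y)\,d\mu(y)\,d\mu(x)\right|\le M t\cdot\frac{1}{\sqrt{t}}\int_{\mathbb{M}}\int_{\mathbb{M}}|f(x)-f(y)|\,\tilde{p}_t(x,y)\,d\mu(y)\,d\mu(x).
\]

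Next I would control the remaining double integral using Proposition~\ref{P:BV2_fullGausian}, which guarantees that
\[
\limsup_{t\to 0^+}\frac{1}{\sqrt{t}}\int_{\mathbb{M}}\int_{\mathbb{M}}|f(x)-f(y)|\,\tilde{p}_t(x,y)\,d\mu(y)\,d\mu(x)<\infty.
\]
Hence this quantity stays bounded by some constant $C$ for all sufficiently small $t>0$, and the right-hand side of the previous display is then at most $MCt$, which tends to $0$ as $t\to 0^+$. Since the argument produces a genuinely small bound of order $t$ rather than merely controlling a limsup, no subtlety arises in passing to the limit. I therefore do not expect any real obstacle here: the only ingredients are the uniform bound on $R$ and the $L^1$-type control furnished by Proposition~\ref{P:BV2_fullGausian}, and the conclusion is essentially the observation that the remainder contributes at order $t^{1/2}$.
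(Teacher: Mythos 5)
Your argument is correct, but it takes a different route from the paper's at the key step. Both proofs begin identically by extracting the uniform bound $|R(t,x,y)|\le M$ from~\eqref{E:HK_expansion} and pulling out the factor $tM$. The difference is in how the remaining quantity $\frac{1}{\sqrt{t}}\int_{\mathbb{M}}\int_{\mathbb{M}}|f(x)-f(y)|\,\tilde{p}_t(x,y)\,d\mu(y)\,d\mu(x)$ is controlled: you invoke Proposition~\ref{P:BV2_fullGausian}, which requires $f\in BV(\mathbb{M})$ and yields a bound of order $MCt$; the paper instead bounds $|f(x)-f(y)|\le|f(x)|+|f(y)|$, uses symmetry of $\tilde p_t$, and shows by integration in exponential polar coordinates with the substitution $\tilde r=r/\sqrt{t}$ and the Jacobian asymptotics~\eqref{E:local_Jacobian} that $\sup_x\int_{B(x,\varepsilon)}\tilde p_t(x,y)\,d\mu(y)$ is uniformly bounded, giving a bound of order $M\sqrt{t}\,\|f\|_{L^1(\mathbb{M},\mu)}$. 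There is no circularity in your version, since Proposition~\ref{P:BV2_fullGausian} is established independently of this lemma, so your proof is shorter and perfectly legitimate for the stated hypothesis $f\in BV(\mathbb{M})$. What the paper's more hands-on computation buys is that the conclusion of the lemma actually holds for every $f\in L^1(\mathbb{M},\mu)$, not just for BV functions, and it avoids leaning on the BV machinery at a point where only integrability is needed; the trade-off is a slightly worse rate ($\sqrt{t}$ versus your $t$), which is irrelevant for the statement.
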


\begin{proof}
By virtue of~\eqref{E:HK_expansion}, the function $R(t,x,y)$ is uniformly bounded on $(0,1)\times\mathbb{M}\times\mathbb{M}$. Thus, there exists $M>0$ such that
\begin{align*}
&\frac{1}{\sqrt{t}}\int_{\mathbb{M}}\int_{ B(x,\varepsilon)}|f(x)-f(y)|   \frac{1}{(4\pi t)^{n/2}} e^{-\frac{d(x,y)^2}{4t}} t R(t,x,y)  \,d\mu(y)\,d\mu(x)\\
&\leq 2M\int_{\mathbb{M}}|f(x)|\int_{B(0,\varepsilon)}\frac{\sqrt{t}}{(4\pi t)^{n/2}}e^{-\frac{d(x,y)^2}{4t}} t  \,d\mu(y)\,d\mu(x).
\end{align*}
Using polar coordinates and the change of variables $\tilde{r}=r/\sqrt{t}$ the latter expression can be written as
\begin{align*}
&2M\int_{\mathbb{M}}|f(x)|\int_0^{\varepsilon/\sqrt{t}}\int_{S^{n-1}}\frac{\sqrt{t}}{(4\pi t)^{n/2}}e^{-\frac{\tilde{r}^2}{4}}\theta_x(\sqrt{t}\tilde{r},u)\,du\,d\tilde{r}\,d\mu(x),
\end{align*}
where $\theta_x$ is the determinant of Jacobian of the exponential map in polar coordinates. Moreover, recall from~\eqref{E:local_Jacobian} that
\begin{equation}\label{E:Jacobian_small_t}
\frac{\theta_x(r\sqrt{t},u)}{(r \sqrt{t})^{n-1}}  \xrightarrow{t\to 0^+} 1
\end{equation}
holds uniformly on $r$ and $u$. Hence, for $t$ small enough
\begin{align*}
&\int_{\mathbb{M}}|f(x)|\int_0^{\varepsilon/\sqrt{t}}\int_{S^{n-1}}\frac{\sqrt{t}}{(4\pi t)^{n/2}}e^{-\frac{\tilde{r}^2}{4}}\theta_x(\sqrt{t}\tilde{r},u)\,du\,d\tilde{r}\,d\mu(x)\\
&\leq \frac{\sqrt{t}}{(4\pi)^{n/2}}\int_{\mathbb{M}}|f(x)|\int_0^{\infty}\int_{S^{n-1}}\tilde{r}^{n-1}e^{-\frac{\tilde{r}^2}{4}}\frac{\theta_x(\sqrt{t}\tilde{r},u)}{(\tilde{r}\sqrt{t})^{n-1}}\,du\,d\tilde{r}\,d\mu(x)\\
&\leq \frac{C_\theta\sqrt{t}}{(4\pi)^{n/2}}\int_{\mathbb{M}}|f(x)|\int_0^{\infty}\int_{S^{n-1}}\tilde{r}^{n-1}e^{-\frac{\tilde{r}^2}{4}}du\,d\tilde{r}\,d\mu(x)\\
&\leq \frac{C_\theta\sqrt{t}}{(4\pi)^{n/2}}\|f(x)\|_{L^1(\mathbb{M},\mu)}\mathcal{H}^{n-1}(S^{n-1})\int_0^\infty \tilde{r}^{n-1}e^{-\frac{\tilde{r}^2}{4}}d\tilde{r}
\end{align*}
which vanishes as $t\to 0^+$.
\end{proof}

We are finally in the position to prove Theorem~\ref{T:tildeHK_to_HK_ve}. 
\begin{proof}[Proof of Theorem~\ref{T:tildeHK_to_HK_ve}]
Let $\delta >0$.  In view of the expansion~\eqref{E:HK_expansion}, there is $\varepsilon_\delta>0$ such that $u(x,y) \le 1+\delta$ for every $x,y \in \mathbb{M}$ with $d(x,y) \le \varepsilon_\delta$. 
In the worst case that $\varepsilon_\delta<\varepsilon$, we can split the integral over $B(x,\varepsilon)$ into 
\begin{multline*}
\frac{1}{\sqrt{t}}\int_{\mathbb{M}}\int_{B(x,\varepsilon)\setminus B(x,\varepsilon_\delta)}|f(x)-f(y)|\,| p_t(x,y) -  \tilde{p}_t(x,y) \, |\,d\mu(y)\,d\mu(x)\\
+\frac{1}{\sqrt{t}}\int_{\mathbb{M}}\int_{B(x,\varepsilon_\delta)}|f(x)-f(y)|\,| p_t(x,y) -  \tilde{p}_t(x,y) \, |\,d\mu(y)\,d\mu(x)=:I_1(t)+I_2(t).
\end{multline*}
Applying the triangle inequality, Lemma~\ref{L:Step1} implies $\limsup_{t\to 0^+}I_1(t)=0$. Thus,
\begin{align*}
&\limsup_{t \to 0^+} \frac{1}{\sqrt{t}}\int_{\mathbb{M}}\int_{B(x,\varepsilon)}|f(x)-f(y)|\,| p_t(x,y) -  \tilde{p}_t(x,y) \, |\,d\mu(y)\,d\mu(x) \\
&=\limsup_{t \to 0^+} \frac{1}{\sqrt{t}}\int_{\mathbb{M}}\int_{B(x,\varepsilon_\delta)}|f(x)-f(y)|\,| p_t(x,y) -  \tilde{p}_t(x,y) \, |\,d\mu(y)\,d\mu(x) \\
&\le  \delta \limsup_{t \to 0^+} \frac{1}{\sqrt{t}}\int_{\mathbb{M}}\int_{B(x,\varepsilon_\delta)}|f(x)-f(y)|  \frac{1}{(4\pi t)^{n/2}} e^{-\frac{d(x,y)^2}{4t}} \,d\mu(y)\,d\mu(x) \\
&\le   \delta \limsup_{t \to 0^+} \frac{1}{\sqrt{t}}\int_{\mathbb{M}}\int_{\mathbb{M}}|f(x)-f(y)|  \frac{1}{(4\pi t)^{n/2}} e^{-\frac{d(x,y)^2}{4t}} \,d\mu(y)\,d\mu(x).
\end{align*}
By virtue of~\eqref{E:BV2_fullGaussian} and since $\delta>0$ is arbitrary we conclude
\[
\lim_{t \to 0} \frac{1}{\sqrt{t}}\int_{\mathbb{M}}\int_{B(x,\varepsilon)}|f(x)-f(y)|\,| p_t(x,y) -  \tilde{p}_t(x,y) \, |\,d\mu(y)\,d\mu(x) =0
\]
which together with~\eqref{E:Step1a_tildeHK_to_HK_ve} yields the desired result.
\end{proof}

According to Theorem~\ref{T:tildeHK_to_HK_ve}, in order to  prove Theorem~\ref{main} it is therefore enough to prove that if $f \in  BV(\mathbb{M})$, then there exists $\varepsilon >0$ such that
\begin{equation}\label{E:tildeHK_to_HK_01_ve2}
\lim_{t \to 0^+} \frac{1}{\sqrt{t}}\int_{\mathbb{M}}\int_{B(x,\varepsilon)}|f(x)-f(y)| \, \tilde{p}_t (x,y) \,d\mu(y)\,d\mu(x)=\frac{ 2 }{\sqrt
{\pi}} \| Df \| (\mathbb M).
\end{equation}

\subsection{Smooth case}\label{S:smooth case ana}
As in the probabilistic approach, we start by pointing out that~\eqref{E:tildeHK_to_HK_01_ve2} is straightforward to obtain when $f \in C^2(\mathbb{M})$.

\begin{proposition}\label{P:smooth case ana}
For every $\varepsilon >0$ and $f \in C^2(\mathbb{M})$,
\[
\lim_{t \to 0^+} \frac{1}{\sqrt{t}}\int_{\mathbb{M}}\int_{B(x,\varepsilon)}|f(x)-f(y)| \, \tilde{p}_t (x,y) \,d\mu(y)\,d\mu(x)=\frac{ 2 }{\sqrt
{\pi}}\int_\mathbb{M} \| \nabla f \| d\mu
\]
\end{proposition}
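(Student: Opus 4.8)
The plan is to pass to exponential polar coordinates centred at $x$, rescale the radial variable by $\sqrt t$, and recognize the resulting quantity as an integral whose integrand converges pointwise to a Gaussian weight times the first-order term of $f$. First I would reduce to the case where $\varepsilon$ is smaller than the injectivity radius of $\mathbb M$: by Lemma~\ref{L:Step1} applied to $\tilde p_t$, for any $0<\varepsilon'<\varepsilon$ the contribution of the annulus $B(x,\varepsilon)\setminus B(x,\varepsilon')$ is dominated by the integral over $\mathbb M\setminus B(x,\varepsilon')$ and hence vanishes in the limit, so the limits over $B(x,\varepsilon)$ and over $B(x,\varepsilon')$ coincide and one may assume $\varepsilon$ is below the injectivity radius. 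Then, by integration in exponential polar coordinates and the change of variable $r=\sqrt t\,\tilde r$, the quantity $\frac{1}{\sqrt t}\int_{B(x,\varepsilon)}|f(x)-f(y)|\tilde p_t(x,y)\,d\mu(y)$ becomes $\int_0^{\varepsilon/\sqrt t}\int_{S^{n-1}} \frac{|f(\exp_x(\sqrt t\,\tilde r u))-f(x)|}{\sqrt t}\,\frac{1}{(4\pi)^{n/2}}e^{-\tilde r^2/4}\,\frac{\theta_x(\sqrt t\,\tilde r,u)}{(\sqrt t\,\tilde r)^{n-1}}\,\tilde r^{n-1}\,du\,d\tilde r$, the prefactor $t^{-1/2}$ and the radial powers of $t$ combining to produce precisely the difference quotient displayed. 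All integrands are nonnegative, so I may freely integrate this in $x$ as well and work with a single integral over $(\tilde r,u,x)\in[0,\infty)\times S^{n-1}\times\mathbb M$ (with the factor $1_{[0,\varepsilon/\sqrt t]}(\tilde r)$).

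Next I would identify the pointwise limit of this integrand. Since $f\in C^2(\mathbb M)$ and $\mathbb M$ is compact, a second-order Taylor expansion of $s\mapsto f(\exp_x(su))$ along the unit-speed geodesic through $x$ with initial velocity $u$ gives $f(\exp_x(su))-f(x)=s\,df_x(u)+O(s^2)$ uniformly in $x$ and $u\in S^{n-1}$, the error being controlled by the supremum of the Hessian of $f$. Hence $\frac{|f(\exp_x(\sqrt t\,\tilde r u))-f(x)|}{\sqrt t}\to \tilde r\,|df_x(u)|$ as $t\to 0^+$, while~\eqref{E:local_Jacobian} yields $\theta_x(\sqrt t\,\tilde r,u)/(\sqrt t\,\tilde r)^{n-1}\to 1$; thus the integrand converges to $\frac{1}{(4\pi)^{n/2}}\tilde r^{n}\,|df_x(u)|\,e^{-\tilde r^2/4}$. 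To interchange limit and integration I would invoke dominated convergence: because $f$ is Lipschitz with constant $\|\nabla f\|_\infty$ and $d(x,\exp_x(\sqrt t\,\tilde r u))=\sqrt t\,\tilde r$, the difference quotient is bounded by $\|\nabla f\|_\infty\,\tilde r$; the rescaled Jacobian is bounded by a uniform constant $C_\theta$ as in the proof of Lemma~\ref{L:Step2} (continuity of $\theta_x(r,u)/r^{n-1}$ up to $r=0$, or~\eqref{E:Jacobian_exp_bound}); and since $\mu(\mathbb M)=1$ and $\int_0^\infty \tilde r^n e^{-\tilde r^2/4}\,d\tilde r<\infty$, the function $\tilde r\mapsto \tilde r^{n}e^{-\tilde r^2/4}$ (times constants) is a $t$-independent integrable majorant.

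Finally I would evaluate the limiting integral. Writing $df_x(u)=\langle\nabla f(x),u\rangle$, it factorizes as $\frac{1}{(4\pi)^{n/2}}\bigl(\int_0^\infty \tilde r^n e^{-\tilde r^2/4}\,d\tilde r\bigr)\bigl(\int_{S^{n-1}}|\langle\nabla f(x),u\rangle|\,du\bigr)$, and by rotational invariance the angular factor equals $c_n\|\nabla f(x)\|$ with $c_n:=\int_{S^{n-1}}|u_1|\,du$. Using $\int_0^\infty \tilde r^n e^{-\tilde r^2/4}\,d\tilde r=2^n\Gamma(\tfrac{n+1}{2})$ and $c_n=2\pi^{(n-1)/2}/\Gamma(\tfrac{n+1}{2})$ — the latter cleanly obtained by comparing polar and Cartesian evaluations of $\int_{\mathbb R^n}|x_1|e^{-|x|^2/4}\,dx$ — the product collapses to exactly $\frac{2}{\sqrt\pi}\|\nabla f(x)\|$, and integrating over $x\in\mathbb M$ gives the claim. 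I expect the main obstacle to be the dominated-convergence step, namely producing bounds on the difference quotient and the rescaled Jacobian that are genuinely uniform in $(t,x,u)$, so that the majorant is $t$-independent and integrable; by contrast the evaluation of the constant, while requiring the two special integrals above, is routine.
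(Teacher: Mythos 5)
Your proposal is correct and follows essentially the same route as the paper's proof: exponential polar coordinates, the rescaling $r=\sqrt{t}\,\tilde r$, a uniform first-order Taylor expansion along geodesics, the uniform convergence $\theta_x(\sqrt{t}\,\tilde r,u)/(\sqrt{t}\,\tilde r)^{n-1}\to 1$, and the explicit Gaussian/spherical computation of the constant $2/\sqrt{\pi}$. The only (welcome) difference is organizational: you run dominated convergence on the full difference quotient with the Lipschitz majorant $\|\nabla f\|_\infty\,\tilde r$ instead of splitting off the quadratic remainder as the paper does, and you explicitly reduce a general $\varepsilon$ to one below the injectivity radius via Lemma~\ref{L:Step1}, a step the paper leaves implicit.
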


\begin{proof}
Let $\varepsilon >0$ be smaller than the injectivity radius of $\mathbb{M}$. We proceed as in the proof of Lemma~\ref{L:Step2} and use polar coordinates to write
\begin{align}\label{E:smooth case ana_01}
& \int_{\mathbb{M}}\int_{B(x,\varepsilon)}|f(x)-f(y)| \, \tilde{p}_t (x,y) \,d\mu(y)\,d\mu(x) \notag\\
=& \int_{\mathbb{M}}\int_{B(x,\varepsilon)}|f(x)-f(y)| \, \frac{1}{(4\pi t)^{n/2}}e^{-\frac{d(x,y)^2}{4t}} \,d\mu(y)\,d\mu(x) \notag\\
=& \int_{\mathbb{M}} \int_0^\varepsilon \int_{S^{n-1}} |f(x)-f(\exp_x(ru))| \, \frac{1}{(4\pi t)^{n/2}}e^{-\frac{r^2}{4t}} \, \theta_x(r,u) du \, dr \,d\mu(x),
\end{align}
Since $f \in C^2(\mathbb{M})$, there is a uniform Taylor expansion of order 1 around $x$,
\[
f(\exp_x(ru))=f(x) + r\left\langle \nabla f (x),u \right\rangle + r^2 H(x,ru),
\]
where $H$ is a bounded function. Moreover, applying the uniform exponential bound on the Jacobian $\theta_x(r,u)$ from~\eqref{E:Jacobian_exp_bound} we deduce on the one hand that
\begin{equation}\label{E:smooth case ana_02}
\lim_{t \to 0^+} \frac{1}{\sqrt{t}} \int_{\mathbb{M}} \int_0^\varepsilon \int_{S^{n-1}} r^2 |H(x,ru)| \, \frac{1}{(4\pi t)^{n/2}}e^{-\frac{r^2}{4t}} \, \theta_x(r,u) du \, dr \,d\mu(x)=0.
\end{equation}
On the other hand, the change of variables $\tilde{r}=r/\sqrt{t}$ yields
\begin{align*}
& \int_{\mathbb{M}} \int_0^\varepsilon \int_{S^{n-1}} |r\left\langle \nabla f (x),u \right\rangle| \, \frac{1}{(4\pi t)^{n/2}}e^{-\frac{r^2}{4t}} \, \theta_x(r,u) du \, dr \,d\mu(x) \\
&= t \int_{\mathbb{M}} \int_0^{\varepsilon/\sqrt{t}} \int_{S^{n-1}} |\tilde{r}\left\langle \nabla f (x),u \right\rangle| \, \frac{1}{(4\pi t)^{n/2}}e^{-\frac{\tilde{r}^2}{4}} \, \theta_x(\tilde{r}\sqrt{t},u) du \, d\tilde{r} \,d\mu(x) \\
&= \frac{1}{(4\pi )^{n/2}} \sqrt{t}   \int_{\mathbb{M}} \int_0^{\varepsilon/\sqrt{t}} \int_{S^{n-1}} \,  |\left\langle \nabla f (x),u \right\rangle|  \tilde{r}^n e^{-\frac{\tilde{r}^2}{4}} \, \frac{\theta_x(\tilde{r}\sqrt{t},u)}{(\tilde{r} \sqrt{t})^{n-1}} du \, d\tilde{r} \,d\mu(x).
\end{align*}
As in~\eqref{E:Jacobian_small_t} we have
\[
\frac{\theta_x(\tilde{r}\sqrt{t},u)}{(\tilde{r} \sqrt{t})^{n-1}}  \xrightarrow{t\to 0^+} 1
\]
uniformly on $x,\tilde{r}, u$ and therefore
\begin{multline*}
\lim_{t \to 0^+} \frac{1}{\sqrt{t}} \int_{\mathbb{M}} \int_0^\varepsilon \int_{S^{n-1}} |r\left\langle \nabla f (x),u \right\rangle| \, \frac{1}{(4\pi t)^{n/2}}e^{-\frac{r^2}{4t}} \, \theta_x(r,u) du \, dr \,d\mu(x) \\
= \frac{1}{(4\pi )^{n/2}}\int_{\mathbb{M}} \int_0^{+\infty} \int_{S^{n-1}} \,  |\left\langle \nabla f (x),u \right\rangle|  r^n e^{-\frac{r^2}{4}} \, du \, dr \,d\mu(x)  
=\frac{ 2 }{\sqrt{\pi}}\int_\mathbb{M} \| \nabla f \| d\mu.
\end{multline*}
Together with~\eqref{E:smooth case ana_01} and~\eqref{E:smooth case ana_02} the latter finally yields
\[
\lim_{t \to 0^+} \frac{1}{\sqrt{t}}\int_{\mathbb{M}}\int_{B(x,\varepsilon)}|f(x)-f(y)| \, \tilde{p}_t (x,y) \,d\mu(y)\,d\mu(x)=\frac{ 2 }{\sqrt
{\pi}}\int_\mathbb{M} \| \nabla f \| d\mu.
\]
\end{proof}

\subsection{General BV case}

The proof in the case of functions $f\in C^2(\mathbb{M})$ crucially relied in the possibility to perform Taylor approximation up to the correct order. Lowering the regularity of the functions this approach is not anymore available and it is now that Theorem~\ref{T:tildeHK_to_HK_ve} allows to overcome the problem. In this section we conclude the proof of the main Theorem~\ref{main} by obtaining suitable upper and lower estimates. We proceed first with the upper bound.

\begin{proposition}\label{P:limsup}
Let $f \in BV(\mathbb{M})$. For any $\delta >0$, there exists $\varepsilon >0$ such that
\[
\limsup_{t \to 0^+} \frac{1}{\sqrt{t}}\int_{\mathbb{M}}\int_{B(x,\varepsilon)}|f(x)-f(y)| \, \tilde{p}_t (x,y) \,d\mu(y)\,d\mu(x) \le (1+\delta) \frac{ 2 }{\sqrt
{\pi}} \| Df \| (\mathbb M).
\]
\end{proposition}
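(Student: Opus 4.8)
The plan is to reduce the estimate to a \emph{uniform} upper bound for smooth functions and then pass to general $f\in BV(\mathbb M)$ by approximation, exploiting that for each \emph{fixed} $t$ the functional
\[
\Phi^\varepsilon_t(g):=\frac{1}{\sqrt t}\int_{\mathbb M}\int_{B(x,\varepsilon)}|g(x)-g(y)|\,\tilde p_t(x,y)\,d\mu(y)\,d\mu(x)
\]
is subadditive and $L^1$-continuous. Indeed, by the symmetry of $\tilde p_t$ and Tonelli, $\Phi^\varepsilon_t(h)\le \frac{2}{\sqrt t}\big(\sup_x\int_{B(x,\varepsilon)}\tilde p_t(x,y)\,d\mu(y)\big)\|h\|_{L^1(\mathbb M,\mu)}$, so that $|\Phi^\varepsilon_t(f)-\Phi^\varepsilon_t(g)|\le\Phi^\varepsilon_t(f-g)\to 0$ whenever $g\to f$ in $L^1$. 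The radius $\varepsilon$ and the factor $1+\delta$ will enter only through the Jacobian: by~\eqref{E:local_Jacobian} together with compactness of the unit sphere bundle, the convergence $\theta_x(r,u)/r^{n-1}\to 1$ is uniform in $(x,u)$, so I may fix $\varepsilon$ so small that $\theta_x(r,u)\le(1+\delta)r^{n-1}$ for all $x$, all $u\in S^{n-1}\subset T_x\mathbb M$, and all $r\le\varepsilon$.

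The heart of the proof is the following uniform smooth bound: for every $g\in C^1(\mathbb M)$,
\[
\Phi^\varepsilon_t(g)\le (1+\delta)\,A_\varepsilon(t)\int_{\mathbb M}\|\nabla g\|\,d\mu,\qquad A_\varepsilon(t):=\frac{c_n}{\sqrt t\,(4\pi t)^{n/2}}\int_0^\varepsilon r^n e^{-r^2/4t}\,dr,
\]
where $c_n=\int_{S^{n-1}}|\langle v,u\rangle|\,du$ for any fixed unit vector $v$. To prove it I would write the inner integral in exponential polar coordinates, apply the bound $\theta_x(r,u)\le(1+\delta)r^{n-1}$, and use the fundamental theorem of calculus along the unit-speed geodesic $\gamma(\rho)=\exp_x(\rho u)$ to obtain $|g(x)-g(\exp_x(ru))|\le\int_0^r|\langle\nabla g(\gamma(\rho)),\dot\gamma(\rho)\rangle|\,d\rho$. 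Retaining the \emph{directional} derivative instead of crudely bounding it by $\|\nabla g\|$ is what produces the sharp constant. Lifting the $(x,u)$-integration to the unit sphere bundle $S\mathbb M$ with its Liouville measure $d\mathcal L=d\mu(x)\,du$ and setting $F(x,u):=\langle\nabla g(x),u\rangle$, the integrand becomes $F(\phi_\rho(x,u))$, where $\phi_\rho(x,u)=(\gamma(\rho),\dot\gamma(\rho))$ is the geodesic flow. Since $\phi_\rho$ preserves $\mathcal L$, Fubini and flow-invariance give $\int_{S\mathbb M}\int_0^r|F(\phi_\rho)|\,d\rho\,d\mathcal L=r\int_{S\mathbb M}|F|\,d\mathcal L=r\,c_n\int_{\mathbb M}\|\nabla g\|\,d\mu$, using $\int_{S^{n-1}}|\langle\nabla g(x),u\rangle|\,du=c_n\|\nabla g(x)\|$. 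Pulling out the remaining radial factor $\int_0^\varepsilon r^n e^{-r^2/4t}\,dr$ yields the displayed bound.

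It remains to take the limit in $t$ and to remove the smoothness assumption. The substitution $r=2\sqrt t\,s$ shows $A_\varepsilon(t)\to c_n\,2^n\Gamma(\tfrac{n+1}{2})/(4\pi)^{n/2}$ as $t\to 0^+$, and an explicit evaluation of $c_n$ gives that this limit equals exactly $2/\sqrt\pi$. For $f\in BV(\mathbb M)$ I would take $f_s=P_sf$, which is smooth, converges to $f$ in $L^1$, and satisfies $\int_{\mathbb M}\|\nabla f_s\|\,d\mu\to\|Df\|(\mathbb M)$ by~\eqref{E:BV_sg_char}. For each fixed $t$, the $L^1$-continuity of $\Phi^\varepsilon_t$ and the smooth bound (applied to $f_s$, with the \emph{same} factor $A_\varepsilon(t)$ for all $s$) give
\[
\Phi^\varepsilon_t(f)=\lim_{s\to0}\Phi^\varepsilon_t(f_s)\le(1+\delta)\,A_\varepsilon(t)\lim_{s\to0}\int_{\mathbb M}\|\nabla f_s\|\,d\mu=(1+\delta)\,A_\varepsilon(t)\,\|Df\|(\mathbb M),
\]
and passing to $\limsup_{t\to0^+}$ concludes the argument. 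The main obstacle is the smooth estimate, and within it the measure-preserving property of the geodesic flow on $S\mathbb M$: this is precisely what converts the one-dimensional slicing inequality into $c_n\int_{\mathbb M}\|\nabla g\|\,d\mu$ with the correct dimensional constant, without any appearance of second derivatives of $g$ (which is crucial, since the naive triangle-inequality splitting $\Phi^\varepsilon_t(f)\le\Phi^\varepsilon_t(f_s)+\Phi^\varepsilon_t(f-f_s)$ leaves a cross term that does not vanish for fixed $s$). Everything else---the absorption of the Jacobian into $1+\delta$ and the Gaussian moment computations---is routine bookkeeping.
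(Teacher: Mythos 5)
Your proposal is correct, and its overall architecture coincides with the paper's: pass to exponential polar coordinates, control the Jacobian $\theta_x(r,u)\le(1+\delta)r^{n-1}$ for small $\varepsilon$, use the fundamental theorem of calculus along unit-speed geodesics to keep the directional derivative $|\langle\nabla g,\dot\gamma\rangle|$ (which is indeed what produces the sharp constant $2/\sqrt\pi$ after the Gaussian moment computation), and then approximate a general $f\in BV(\mathbb M)$ by smooth functions. The one step where you genuinely diverge is the conversion of the $(x,u,\rho)$--integral into $c_n\int_{\mathbb M}\|\nabla g\|\,d\mu$: the paper performs the change of variables $(x,u)\mapsto(\exp_x(sru),\para_{0,s}u)$ and shrinks $\varepsilon$ a second time so that the inverse Jacobian of this map is at most $1+\delta$, ending up with a factor $(1+\delta)^2$; you instead observe that this map \emph{is} the time-$sr$ geodesic flow on the unit sphere bundle (since the velocity of a geodesic is parallel along it) and invoke Liouville's theorem, i.e.\ exact invariance of the measure $d\mu(x)\,du$ under the flow. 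This is an exact identity rather than a perturbative bound, so it costs nothing, yields the cleaner factor $(1+\delta)$, and removes one of the two $\varepsilon$-shrinking steps. Your approximation step also differs cosmetically: you take $f_s=P_sf$ and use the De Giorgi characterization~\eqref{E:BV_sg_char} for $\int_{\mathbb M}\|\nabla P_sf\|\,d\mu\to\|Df\|(\mathbb M)$, whereas the paper takes a generic smooth approximating sequence from \cite[Proposition 1.4]{MPPP07b}; both are legitimate, and your explicit observation that $\Phi^\varepsilon_t$ is $L^1$-Lipschitz for fixed $t$ (with a $t$-dependent constant, which is all that is needed since the smooth bound is uniform in $s$) correctly justifies the exchange of limits that the paper leaves implicit. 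The only point you should make explicit in a final write-up is the statement and reference for Liouville's theorem on $S\mathbb M$, since the paper never introduces the Liouville measure.
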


\begin{proof}
Let $\varepsilon_0 >0$ be smaller than the injectiviy radius of $\mathbb{M}$. Using polar coordinates exactly as in~\eqref{E:smooth case ana_01} we have
\begin{align*}
& \int_{\mathbb{M}}\int_{B(x,\varepsilon_0)}|f(x)-f(y)| \, \tilde{p}_t (x,y) \,d\mu(y)\,d\mu(x) \\
=& \int_{\mathbb{M}} \int_0^{\varepsilon_0} \int_{S^{n-1}} |f(x)-f(\exp_x(ru))| \, \frac{1}{(4\pi t)^{n/2}}e^{-\frac{r^2}{4t}} \, \theta_x(r,u) du \, dr \,d\mu(x), 
\end{align*}
where $\theta_x$ is the determinant of Jacobian of the exponential map in polar coordinates. Let us first assume that $f \in C^1(\mathbb{M})$. In this case, we can write 
\begin{align*}
|f(x)-f(\exp_x(ru))|& =r\left|  \int_0^1\langle \nabla f (\exp_x (sru)) , \para_{0,s}u \rangle ds \right| 
 \le r \int_0^1 \left| \langle \nabla f (\exp_x (sru)) , \para_{0,s}u \rangle  \right|ds.
\end{align*}
Let now $\delta >0$ and choose $0< \varepsilon_1 <\varepsilon_0$ small enough so that
\[
 \theta_x(r,u) \le (1+\delta) r^{n-1}
\]
holds uniformly for $ 0 \le r \le \varepsilon_1$ 
and thus
\begin{multline*}
\int_{\mathbb{M}}\int_{B(x,\varepsilon_1)}|f(x)-f(y)| \, \tilde{p}_t (x,y) \,d\mu(y)\,d\mu(x) \\
\le (1+\delta)  \int_{\mathbb{M}} \int_0^{\varepsilon_1} \int_{S^{n-1}} \int_0^1 \left| \langle \nabla f (\exp_x (sru)) , \para_{0,s}u \rangle  \right| \, \frac{1}{(4\pi t)^{n/2}}e^{-\frac{r^2}{4t}} \, r^n \, ds \, du \, dr \,d\mu(x).
\end{multline*}
In the latter integral, we now perform the change of variables 
\[
(x,u) \mapsto ( \exp_x (sru) , \para_{0,s}u)
\]
and choose $0< \varepsilon_2 <\varepsilon_1$ so that the inverse of the Jacobian of this map is bounded above by $1+\delta$ uniformly for $0 \le r \le \varepsilon_2$.
 Putting $z=\exp_x (sru)$ and $v=\para_{0,s}u$, 
this yields
\begin{align*}
& \int_{\mathbb{M}}\int_{B(x,\varepsilon_2)}|f(x)-f(y)| \, \tilde{p}_t (x,y) \,d\mu(y)\,d\mu(x) \\
&\le  (1+\delta)^2  \int_{\mathbb{M}} \int_0^{\varepsilon_2} \int_{S^{n-1}} \int_0^1 \left| \langle \nabla f (z) , v \rangle  \right| \, \frac{1}{(4\pi t)^{n/2}}e^{-\frac{r^2}{4t}} \, r^n \, ds \, dv \, dr \,d\mu(z) \\
&\le  (1+\delta)^2 \sqrt{t} \frac{ 2 }{\sqrt{\pi}}\int_\mathbb{M} \| \nabla f \| d\mu.
\end{align*}
The latter inequality holds for $f \in C^1(\mathbb{M})$ and by approximation also for $f \in BV(\mathbb M)$. Indeed, see e.g.~\cite[Proposition 1.4]{MPPP07b}, for any $f\in BV(\mathbb{M})$ there is $\{f_n\}_{n\geq 0} \subset C^\infty(\mathbb{M})$ with $f_n \to f$ in $L^1$ and $\int_\mathbb{M} \| \nabla f_n \| d\mu \to \| Df \| (\mathbb M)$. Therefore, we conclude that
\[
\limsup_{t \to 0^+} \frac{1}{\sqrt{t}}\int_{\mathbb{M}}\int_{B(x,\varepsilon_2)}|f(x)-f(y)| \, \tilde{p}_t (x,y) \,d\mu(y)\,d\mu(x) \le (1+\delta)^2 \frac{ 2 }{\sqrt{\pi}} \| Df \| (\mathbb M)
\]
holds for any $f\in BV(\mathbb{M})$.
\end{proof}

The final ingredient we need is a suitable lower bound. In this case we obtain it by smoothing out $f$ and then use Proposition~\ref{P:smooth case ana}. 

\begin{proposition}\label{P:liminf}
Let $f \in BV(\mathbb{M})$. For any $\delta >0$, there exists $\varepsilon >0$ such that
\[
\liminf_{t \to 0^+} \frac{1}{\sqrt{t}}\int_{\mathbb{M}}\int_{B(x,\varepsilon)}|f(x)-f(y)| \, \tilde{p}_t (x,y) \,d\mu(y)\,d\mu(x) \ge (1-\delta) \frac{ 2 }{\sqrt{\pi}}\| Df \| (\mathbb M).
\]
\end{proposition}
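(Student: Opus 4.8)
The plan is to deduce the lower bound from the already established smooth case by \emph{mollifying} $f$ with the heat semigroup. For $s>0$ write $f_s:=P_s f$, which is smooth since the heat kernel is smooth. Abbreviating
\[
J_t^\varepsilon(g):=\frac{1}{\sqrt{t}}\int_{\mathbb{M}}\int_{B(x,\varepsilon)}|g(x)-g(y)|\,\tilde{p}_t(x,y)\,d\mu(y)\,d\mu(x),
\]
Proposition~\ref{P:smooth case ana} applied to the smooth function $f_s$ gives $\lim_{t\to0^+}J_t^\varepsilon(f_s)=\frac{2}{\sqrt{\pi}}\int_{\mathbb{M}}\|\nabla f_s\|\,d\mu$ for \emph{every} $\varepsilon>0$. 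Moreover, by the heat semigroup characterization~\eqref{E:BV_sg_char} one has $\int_{\mathbb{M}}\|\nabla P_s f\|\,d\mu\to\|Df\|(\mathbb{M})$ as $s\to0^+$ (and $\int_{\mathbb{M}}\|\nabla P_s f\|\,d\mu\le\|Df\|(\mathbb{M})$ by the contractivity of the flow), so for a prescribed $\delta$ one can fix $s$ small with $\frac{2}{\sqrt{\pi}}\int_{\mathbb{M}}\|\nabla f_s\|\,d\mu\ge(1-\tfrac{\delta}{2})\frac{2}{\sqrt{\pi}}\|Df\|(\mathbb{M})$. Everything then reduces to a one-sided comparison of $J_t^\varepsilon(f)$ with $J_t^{\varepsilon_0}(f_s)$.

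The heart of the argument is therefore a \emph{mollification inequality} of the form $J_t^{\varepsilon_0}(P_s f)\le(1+C\delta)\,J_t^{\varepsilon}(f)+o_{t\to0^+}(1)$ for some $\varepsilon_0<\varepsilon$, which is the manifold analogue of the Euclidean fact that convolving with a probability kernel does not increase the Bourgain--Brezis--Mironescu functional. The naive route through the reverse triangle inequality $|f(x)-f(y)|\ge|f_s(x)-f_s(y)|-|f(x)-f_s(x)|-|f(y)-f_s(y)|$ is bound to fail, since the resulting error is of order $t^{-1/2}\|f-P_sf\|_{L^1(\mathbb{M},\mu)}$, which blows up as $t\to0^+$. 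Instead I would exploit the convexity of $g\mapsto|g(x)-g(y)|$ together with a \emph{parallel coupling} of the two averaging measures. Writing $P_sf(x)=\int_{\mathbb{M}}p_s(x,w)f(w)\,d\mu(w)$ and the analogous formula at $y$, I would express both integrals in exponential coordinates and pair $w=\exp_x v$ with its parallel counterpart $w'=\exp_y(\para_{x,y}v)$, using that $\para_{x,y}$ is an isometry. By the Gaussian expansion~\eqref{E:HK_expansion} of $p_s$ and the Jacobian asymptotics~\eqref{E:local_Jacobian}, for $d(x,y)\le\varepsilon$ and $s,\varepsilon$ small the two weights attached to $w$ and $w'$ differ from the common Gaussian weight $(4\pi s)^{-n/2}e^{-\|v\|^2/(4s)}$ by a factor $1+C\delta$, so Jensen's inequality yields, up to such factors,
\[
|P_sf(x)-P_sf(y)|\lesssim\int_{T_x\mathbb{M}}(4\pi s)^{-n/2}e^{-\|v\|^2/(4s)}\,\big|f(\exp_x v)-f(\exp_y(\para_{x,y}v))\big|\,dv.
\]

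Inserting this into $J_t^{\varepsilon_0}(f_s)$, it remains, for each fixed $v$, to perform the change of variables $(x,y)\mapsto(\exp_x v,\exp_y(\para_{x,y}v))=:(X,Y)$, which carries the inner ball $B(x,\varepsilon_0)$ into $B(X,\varepsilon)$; integrating the probability weight in $v$ then bounds the double integral by $\sqrt{t}\,J_t^{\varepsilon}(f)$. I expect the \textbf{main obstacle} to be precisely the \emph{uniform} control of the three sources of error in this step---the Gaussian approximation of $p_s$, the exponential-map Jacobians, and the distortion of the distance $d(x,y)$ (hence of the Gaussian weight $\tilde{p}_t(x,y)$) under the parallel coupling---since the absence of translation invariance prevents the exact Euclidean cancellation and is exactly what forces one to work in small balls and to accept the multiplicative loss $1+C\delta$; this is where the bounded geometry and finite-dimensionality of $\mathbb{M}$ enter critically. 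Granting the mollification inequality, taking $\liminf_{t\to0^+}$ gives $(1+C\delta)\liminf_{t\to0^+}J_t^\varepsilon(f)\ge\lim_{t\to0^+}J_t^{\varepsilon_0}(f_s)=\frac{2}{\sqrt{\pi}}\int_{\mathbb{M}}\|\nabla f_s\|\,d\mu$, and combining with the choice of $s$ above and relabelling $\delta$ yields $\liminf_{t\to0^+}J_t^\varepsilon(f)\ge(1-\delta)\frac{2}{\sqrt{\pi}}\|Df\|(\mathbb{M})$, as claimed.
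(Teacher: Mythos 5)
Your overall strategy is the same as the paper's: mollify $f$, compare the BBM-type functional of the mollification with that of $f$ via a parallel-transport coupling and Jensen's inequality, perform the change of variables $(x,y)\mapsto(\exp_x v,\exp_y(\para_{x,y}v))$, invoke the smooth case (Proposition~\ref{P:smooth case ana}), and finally let the mollification parameter tend to $0$. The final bookkeeping (using~\eqref{E:BV_sg_char} to send $\int_{\mathbb M}\|\nabla P_sf\|\,d\mu\to\|Df\|(\mathbb M)$) is legitimate; the paper instead uses an explicit mollifier and~\eqref{E:BV_Lip_char}, which is an inessential difference. (Your parenthetical $\int_{\mathbb M}\|\nabla P_sf\|\,d\mu\le\|Df\|(\mathbb M)$ is not true on a general compact manifold without a curvature hypothesis --- one only gets a factor $e^{Ks}$ --- but you never use it.)

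The genuine gap is in your central ``mollification inequality,'' and it is precisely the failure mode you yourself diagnose for the naive triangle-inequality route. With $f_s=P_sf$, the representation $P_sf(x)=\int_{\mathbb M}p_s(x,w)f(w)\,d\mu(w)$ integrates over all of $\mathbb M$, while the expansion~\eqref{E:HK_expansion} and the Jacobian asymptotics~\eqref{E:local_Jacobian} are only available for $d(x,w)\le\kappa$ (and only give a $1+C\delta$ factor for $\|v\|$ small, with $\kappa$ fixed). After the coupling, the off-ball tails and the annulus $\varepsilon'\le\|v\|\le\kappa$ contribute to $|P_sf(x)-P_sf(y)|$ an \emph{additive} error which, for fixed $s$, is a positive constant $c(s)$ independent of $d(x,y)$; inserted into $J_t^{\varepsilon_0}$ this produces a term of order $c(s)/\sqrt{t}$, which diverges as $t\to0^+$ rather than being the $o_{t\to0^+}(1)$ your inequality requires. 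Closing this would need an extra argument (e.g.\ that the tail difference is Lipschitz in $d(x,y)$ with constant tending to $0$ with $s$, via off-diagonal gradient bounds on $p_s$). The paper avoids the issue entirely by \emph{not} using $P_s$: it mollifies with a truncated radial Gaussian defined directly on the tangent space, $f_{s,\varepsilon_0}(x)=C(s,\varepsilon_0)\int_{B_x(0,\varepsilon_0)}e^{-\|\xi\|^2/s}f(\exp_x\xi)\,d\xi$, for which the identity $f_{s,\varepsilon_0}(y)=C(s,\varepsilon_0)\int_{B_x(0,\varepsilon_0)}e^{-\|\xi\|^2/s}f(\exp_y(\para_{x,y}\xi))\,d\xi$ is \emph{exact} (parallel transport is an isometry of tangent spaces), so the only errors left are the geometric ones --- the distortion $(1-\delta)d(x,y)\le d(\exp_x\xi,\exp_y(\para_{x,y}\xi))\le(1+\delta)d(x,y)$ and the Jacobian of the coupling map --- each uniformly controlled by $1+\delta$ on sufficiently small balls. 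You should either adopt such a compactly supported tangent-space mollifier or supply the missing tail estimate.
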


\begin{proof}
Let $f \in BV(\mathbb M)$, fix $\delta>0$ and let $\varepsilon_0 >0$ be smaller than the injectivity radius of $\mathbb{M}$. For $s>0$, consider the function $f_{s,\varepsilon_0}$ defined on $\mathbb M$ by
\begin{equation*}
f_{s,\varepsilon_0}(x):= C(s,\varepsilon_0) \int_{B_x(0,\varepsilon_0) } e^{-\frac{\| \xi \|^2}{s}} f (\exp_x (\xi)) d\xi,
\end{equation*}
where $C(s,\varepsilon_0)$ is such that $C(s,\varepsilon_0) \int_{B_x(0,\varepsilon_0) } e^{-\frac{\| \xi \|^2}{s}}  d\xi=1$. Since $f_{s,\varepsilon_0}$ is essentially a convolution with a smooth kernel, $f_{s,\varepsilon_0} \in C^\infty (\mathbb{M})$. Also, note that if $y \in \mathbb{M}$ with $d(x,y) \le \varepsilon_0$ we also have 
\[
f_{s,\varepsilon_0}(y)= C(s,\varepsilon_0) \int_{B_x(0,\varepsilon_0) } e^{-\frac{\| \xi \|^2}{s}} f (\exp_y ( \para_{x,y} \, \xi)) d\xi,
\]
where $B_x(0,\varepsilon_0)$ denotes a ball in $T_x\mathbb{M}$ and $\para_{x,y} \, \xi$ denotes the parallel transport of $\xi $ from $T_x\mathbb{M}$ to $T_{y} \mathbb{M}$ along the unique length parametrized geodesic joining $x$ to $y$. We may now chose $0<\varepsilon_1 <\varepsilon_0$ so that 
\[
(1-\delta) d(x,y) \le d\big(\exp_x (\xi),  \exp_y ( \para_{x,y} \, \xi)\big) \le (1+\delta) d(x,y)
\]
holds uniformly for all $d(x,y) \le \varepsilon_1$ and $\| \xi \| \le \varepsilon_1$. 
Thus,
\begin{align*}
 & \int_{\mathbb{M}}\int_{B(x,\varepsilon_1)}|f_{s,\varepsilon_1}(x)-f_{s,\varepsilon_1}(y)| \, \tilde{p}_t (x,y) \,d\mu(y)\,d\mu(x) \\
 \le & C(s,\varepsilon_1)\int_{\mathbb{M}}\int_{B(x,\varepsilon_1)} \int_{B_x(0,\varepsilon_1) } e^{-\frac{\| \xi \|^2}{s}} | f (\exp_x (\xi))-f (\exp_y ( \para_{x,y} \, \xi) )|  \, \tilde{p}_t (x,y) \, d\xi \, d\mu(y)\,d\mu(x).
\end{align*}
In the latter integral, we perform the change of variables
\[
(x,y) \to ( \exp_x (\xi) , \exp_y ( \para_{x,y} \, \xi))
\]
and choose $0< \varepsilon_2 <\varepsilon_1$ so that, uniformly for $0 \le r \le \varepsilon_2$, the inverse of the Jacobian of this map is bounded above by $1+\delta$. Then, setting $z_1=\exp_x (\xi)$ and $z_2=\exp_y ( \para_{x,y} \, \xi)$ we have
\begin{align*}
 & \int_{\mathbb{M}}\int_{B(x,\varepsilon_2)}|f_{s,\varepsilon_2}(x)-f_{s,\varepsilon_2}(y)| \, \tilde{p}_t (x,y) \,d\mu(y)\,d\mu(x) \\
 \le & (1+\delta)C(s,\varepsilon_2)\int_{\mathbb{M}}\int_{B(z_1,(1+\delta)\varepsilon_2)} \int_{B_x(0,\varepsilon_2) } e^{-\frac{\| \xi \|^2}{s}} | f (z_1)-f (z_2) |  \, \frac{e^{-\frac{(1-\delta)^2d(z_1,z_2)^2}{4t}}}{(4\pi t)^{n/2}} \, d\xi \, d\mu(z_2)\,d\mu(z_1)  \\
 \le & (1+\delta) \int_{\mathbb{M}}\int_{B(z_1,(1+\delta)\varepsilon_2)}   | f (z_1)-f (z_2) |  \, \frac{1}{(4\pi t)^{n/2}}e^{-\frac{(1-\delta)^2d(z_1,z_2)^2}{4t}}  \, d\mu(z_2)\,d\mu(z_1) \\
  \le & \frac{1+\delta}{(1-\delta)^n} \int_{\mathbb{M}}\int_{B(z_1,(1+\delta)\varepsilon_2)}   | f (z_1)-f (z_2) |  \, \tilde{p}_{t/(1-\delta)^2}(z_1,z_2) \, d\mu(z_2)\,d\mu(z_1)
\end{align*}
which implies that
\begin{multline*}
  \liminf_{t \to 0^+} \frac{1}{\sqrt{t}} \int_{\mathbb{M}}\int_{B(z_1,(1+\delta)\varepsilon_2)}   | f (z_1)-f (z_2) |  \, \tilde{p}_{t}(z_1,z_2) \, d\mu(z_2)\,d\mu(z_1) \\
 \ge  \frac{(1-\delta)^{n+1}}{(1+\delta)}  \liminf_{t \to 0^+} \frac{1}{\sqrt{t}} \int_{\mathbb{M}}\int_{B(x,\varepsilon_2)}|f_{s,\varepsilon_2}(x)-f_{s,\varepsilon_2}(y)| \, \tilde{p}_t (x,y) \,d\mu(y)\,d\mu(x).
\end{multline*}
In view of Proposition~\ref{P:smooth case ana} we also know that
\[
\liminf_{t \to 0^+} \frac{1}{\sqrt{t}} \int_{\mathbb{M}}\int_{B(x,\varepsilon_2)}|f_{s,\varepsilon_2}(x)-f_{s,\varepsilon_2}(y)| \, \tilde{p}_t (x,y) \,d\mu(y)\,d\mu(x)=\frac{ 2 }{\sqrt
{\pi}}\int_\mathbb{M} \| \nabla f_{s,\varepsilon_2} \| d\mu
\]
and hence
\begin{multline*}
\liminf_{t \to 0^+} \frac{1}{\sqrt{t}} \int_{\mathbb{M}}\int_{B(z_1,(1+\delta)\varepsilon_2)}   | f (z_1)-f (z_2) |  \, \tilde{p}_{t}(z_1,z_2) \, d\mu(z_2)\,d\mu(z_1) \\
 \ge \frac{(1-\delta)^{n+1}}{(1+\delta)} \frac{ 2 }{\sqrt{\pi}}\int_\mathbb{M} \| \nabla f_{s,\varepsilon_2} \| d\mu.
\end{multline*}
Noting that $f_{s,\varepsilon_2} \xrightarrow{s \to 0} f $ in $L^1(\mathbb{M},\mu)$ and $f\in BV(\mathbb{M})$, by virtue of~\eqref{E:BV_Lip_char} we conclude 
\begin{multline*}
  \liminf_{t \to 0^+} \frac{1}{\sqrt{t}} \int_{\mathbb{M}}\int_{B(z_1,(1+\delta)\varepsilon_2)}   | f (z_1)-f (z_2) |  \, \tilde{p}_{t}(z_1,z_2) \, d\mu(z_2)\,d\mu(z_1) \\
 \ge  \frac{(1-\delta)^{n+1}}{(1+\delta)} \frac{ 2 }{\sqrt{\pi}} \| Df \| (\mathbb M).
\end{multline*}
\end{proof}

We are now ready to prove Theorem~\ref{main} and in particular to compute the variation of a function $f \in BV( {\mathbb M})$ as
\[
\lim_{t \to 0^+} \frac{1}{\sqrt{t}}  \int_{\mathbb M} P_t (|f-f(x)|)(x) d\mu(x) =  \frac{ 2 }{\sqrt{\pi}} \| Df \| (\mathbb M).
\]

\begin{proof}[Proof of Theorem~\ref{main}]
Combining Propositions~\ref{P:limsup} and~\ref{P:liminf} and Theorem~\ref{T:tildeHK_to_HK_ve}, for any $\delta>0$ we have
\[
\limsup_{t \to 0^+} \frac{1}{\sqrt{t}}\int_{\mathbb{M}}\int_{\mathbb M}|f(x)-f(y)| \, p_t (x,y) \,d\mu(y)\,d\mu(x) \le (1+\delta) \frac{ 2 }{\sqrt{\pi}} \| Df \| (\mathbb M)
\]
and
\[
\liminf_{t \to 0^+} \frac{1}{\sqrt{t}}\int_{\mathbb{M}}\int_{\mathbb M}|f(x)-f(y)| \, p_t (x,y) \,d\mu(y)\,d\mu(x) \ge (1-\delta) \frac{ 2 }{\sqrt{\pi}} \| Df \| (\mathbb M).
\]
Letting $\delta \to 0$ the assertion follows.
\end{proof}

\section{Sobolev spaces: $p>1$}\label{S:main_Sobolev}
As it may have become apparent to the reader, all arguments used in the analytic approach from Section~\ref{S:Analytic_approach} apply also when $p>1$, thus yielding a corresponding characterization of Sobolev spaces $W^{1,p}(\mathbb{M})$. This characterization goes along the lines of~\cite[Theorem 3.3]{ARB20} and here we can provide the convergence of the $p$-variation. The proof can be carried out almost verbatim to the previous section and thus details are left to the interested reader.

\begin{theorem}
Let $\mathbb M$ be a compact Riemannian manifold of dimension $n$. For any $p> 1$,
\[
W^{1,p}(\mathbb{M})=\Big\{f\in L^p(\mathbb{M},\mu)\colon\limsup_{t\to 0^+}\frac{1}{\sqrt{t}}\int_{\mathbb{M}}P_t(|f-f(x)|^p)(x)\,d\mu(x)<\infty\Big\}.
\]
Moreover, when the limit exists it satisfies
\begin{equation}\label{E:Sobolev_sn_char}
\lim_{t \to 0^+} \frac{1}{t^{p/2}} \int_{\mathbb M} P_t (|f-f(x)|^p)(x) d\mu(x) =  \frac{2^p}{\sqrt{\pi}}\Gamma\Big(\frac{1+p}{2}\Big)\int_{\mathbb{M}} \|\nabla f \|^p d\mu.
\end{equation}
\end{theorem}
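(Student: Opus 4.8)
The plan is to follow the analytic scheme of Section~\ref{S:Analytic_approach} almost verbatim, replacing $|f-f(x)|$ by $|f-f(x)|^p$ and the normalization $t^{-1/2}$ by $t^{-p/2}$ throughout. This is the correct scaling: under $r=\sqrt t\,\tilde r$ the powers of $t$ combine as $t^{p/2}\cdot t^{(n-1)/2}\cdot t^{-n/2}\cdot t^{1/2}=t^{p/2}$, so $t^{-p/2}$ is exactly what produces a finite nonzero limit. First I would record the $L^p$-analogue of the heat-semigroup characterization, namely that $f\in W^{1,p}(\mathbb M)$ if and only if $\limsup_{t\to0^+}t^{-p/2}\int_\mathbb{M}P_t(|f-f(x)|^p)(x)\,d\mu(x)<\infty$; this set equality is established along the lines of~\cite[Theorem 3.3]{ARB20} and supplies the uniform bound that the subsequent Gaussian reduction requires. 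The remaining task is then to identify the value of the limit with the right-hand side of~\eqref{E:Sobolev_sn_char}.

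Next I would prove the $p$-version of Theorem~\ref{T:tildeHK_to_HK_ve}, that $p_t(x,y)$ may be replaced by $1_{B(x,\varepsilon)}(y)\tilde p_t(x,y)$ without affecting the limit. The proof of Lemma~\ref{L:Step1} carries over unchanged, as it only uses the Gaussian off-diagonal decay from~\eqref{E:Hsu_HK_estimate1} together with $f\in L^p$, and Lemma~\ref{L:Step2} likewise goes through because the remainder $tR(t,x,y)$ in~\eqref{E:HK_expansion} contributes an extra factor of $\sqrt t$ after rescaling. The conclusion then follows from the $L^p$ uniform bound exactly as before, reducing everything to computing $\lim_{t\to0^+}t^{-p/2}\int_\mathbb{M}\int_{B(x,\varepsilon)}|f(x)-f(y)|^p\tilde p_t(x,y)\,d\mu(y)\,d\mu(x)$ for small $\varepsilon$.

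The smooth case is where the constant appears. For $f\in C^2(\mathbb M)$, the first-order Taylor expansion $f(\exp_x(ru))=f(x)+r\langle\nabla f(x),u\rangle+O(r^2)$, polar coordinates, and the change of variables $r=\sqrt t\,\tilde r$ reduce the integrand to $\frac{1}{(4\pi)^{n/2}}|\langle\nabla f(x),u\rangle|^p\,\tilde r^{\,n+p-1}e^{-\tilde r^2/4}$, the $O(r^2)$ term being negligible exactly as in~\eqref{E:smooth case ana_02}. The remaining work is the two elementary integrals
\[
\int_0^\infty \tilde r^{\,n+p-1}e^{-\tilde r^2/4}\,d\tilde r=2^{n+p-1}\Gamma\Big(\tfrac{n+p}{2}\Big),\qquad \int_{S^{n-1}}|\langle e,u\rangle|^p\,du=\frac{2\pi^{(n-1)/2}\Gamma(\frac{p+1}{2})}{\Gamma(\frac{n+p}{2})},
\]
where $e$ is any unit vector. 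Multiplying these, the factors $\Gamma(\frac{n+p}{2})$ cancel and $(4\pi)^{n/2}=2^n\pi^{n/2}$, leaving precisely $\frac{2^p}{\sqrt\pi}\Gamma(\frac{p+1}{2})\|\nabla f(x)\|^p$, which integrated over $\mathbb M$ yields the right-hand side of~\eqref{E:Sobolev_sn_char}.

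Finally I would upgrade from $C^2$ to general $f\in W^{1,p}(\mathbb M)$ by reproducing Propositions~\ref{P:limsup} and~\ref{P:liminf}. For the upper bound the only new point is that $|f(x)-f(\exp_x(ru))|^p\le r^p\int_0^1|\langle\nabla f(\exp_x(sru)),\para_{0,s}u\rangle|^p\,ds$ by Jensen's inequality, since $t\mapsto t^p$ is convex and $ds$ is a probability measure on $[0,1]$; the same changes of variables and the smooth-case integral then give $\limsup\le(1+\delta)\frac{2^p}{\sqrt\pi}\Gamma(\frac{p+1}{2})\int_\mathbb{M}\|\nabla f\|^p\,d\mu$. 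For the lower bound I would mollify $f$ as in Proposition~\ref{P:liminf}, apply the smooth case to $f_{s,\varepsilon}$, and pass to the limit using that mollifications converge in $W^{1,p}$, so that $\int_\mathbb{M}\|\nabla f_{s,\varepsilon}\|^p\,d\mu\to\int_\mathbb{M}\|\nabla f\|^p\,d\mu$; this is in fact cleaner than the BV case, since for $p>1$ one has a genuine $L^p$ gradient rather than only the lower-semicontinuous total variation. Letting $\delta\to0$ matches the two bounds. The main obstacle is bookkeeping rather than conceptual: one must verify that the geometric distortions controlled by the injectivity radius and the Jacobian bound~\eqref{E:Jacobian_exp_bound} interact correctly with the extra power $p$ in all three changes of variables, and confirm that the $L^p$ uniform bound underlying the Gaussian reduction is genuinely available independently of the limit one is trying to compute.
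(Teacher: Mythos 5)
Your proposal is correct and is exactly the route the paper intends: the paper's proof consists of the single remark that the analytic argument of Section~\ref{S:Analytic_approach} carries over verbatim with $|f-f(x)|^p$ and the normalization $t^{-p/2}$, and you have supplied precisely those details, including the one genuinely new ingredient (Jensen's inequality on the $ds$-integral in the upper bound) and the correct evaluation of the radial and spherical integrals yielding $\tfrac{2^p}{\sqrt\pi}\Gamma(\tfrac{p+1}{2})$. Nothing further is needed.
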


Since $\frac{2^p}{\sqrt{\pi}}\Gamma\Big(\frac{1+p}{2}\Big)=\frac{1}{\mathcal{H}^{n-1}(S^{n-1})}\int_{S^{n-1}}|\langle e, v\rangle|^p\,dv$ for a unit vector $e\in S^{n-1}$, we have that~\eqref{E:Sobolev_sn_char} in particular recovers~\cite[Theorem 1.1]{KM19}. Also, in this case it is known that $f\in W^{1,p}(\mathbb{M})$ if and only if there exists a sequence $\{f_n\}_{n\geq 1}\subset C^\infty(\mathbb{M})$ for which $f_n\to f$ in $L^p(\mathbb{M},\mu)$ and $\int_{\mathbb{M}}\|\nabla f_n\|^pd\mu\to\int_{\mathbb{M}}|\nabla f\|^pd\mu$, see e.g.~\cite[Proposition 3.2]{Heb99}.

\bibliographystyle{amsplain}
\bibliography{Riemannian_BV}
\end{document}